\let\ORIlabel\label
\let\ORIrefstepcounter\refstepcounter
\AddToHook{package/hyperref/before}{%
  \let\label\ORIlabel
  \let\refstepcounter\ORIrefstepcounter
}
 
\documentclass[hidelinks,onefignum,onetabnum]{siamart220329}

\usepackage{hyphenat}
\usepackage{relsize}
\usepackage{lipsum}
\usepackage{amsmath,amsfonts,amssymb}
\usepackage{graphicx,subfig}
\usepackage{epstopdf}
\usepackage{algorithmic}
\usepackage{bm}
\usepackage{stmaryrd}
\usepackage[export]{adjustbox}
\usepackage{booktabs}
\ifpdf
  \DeclareGraphicsExtensions{.eps,.pdf,.png,.jpg}
\else
  \DeclareGraphicsExtensions{.eps}
\fi

\newsiamremark{assumption}{Assumption}
\newsiamremark{remark}{Remark}
\newsiamremark{hypothesis}{Hypothesis}
\crefname{hypothesis}{Hypothesis}{Hypotheses}
\newsiamthm{claim}{Claim}

\headers{NOSAS for IPDG}{M. S. Fabien, S. Liu, M. Sarkis}

\title{A nonoverlapping spectral additive Schwarz method for 
interior penalty discontinuous Galerkin discretizations 
of Anisotropic Elliptic Problems
\thanks{Submitted to the editors DATE.
}
}

\author{Maurice S. Fabien\thanks{
Center for Computational Science and Engineering, Schwarzman College of Computing,
\\
 Massachusetts Institute of Technology,
            {77 Massachusetts Avenue}, 
            {Cambridge},
            {MA},
            {02139}, 
            {USA}
            (\email{mfabien@mit.edu})
\\
Department of Mathematics, University of Wisconsin-Madison
\\
            { 500 Lincoln Drive}, 
            {Madison},
            {WI},
            {53706}, 
            {USA} 
,}
  \and 
  Sijing Liu\thanks{Department of Mathematics, State University of New York Polytechnic Institute, 100 Seymour Rd, Utica, NY 13502, USA (\email{lius7@sunypoly.edu})}.
  \and
  Marcus Sarkis\thanks{Department of Mathematical Sciences, Worcester Polytechnic Institute, 100 Institute Rd, Worcester, MA 01609, USA (\email{msarkis@wpi.edu})}.
}

\usepackage{amsopn}

\ifpdf
\hypersetup{
  pdftitle={NOSAS for IPDG},
  pdfauthor={M. S. Fabien, S. Liu, M. Sarkis},
}
\fi

\newcommand{\K}{\bm{K}}
\newcommand{\R}{\bm{R}}

\begin{document}

\maketitle
\begin{abstract}
 We design and analyze a nonoverlapping additive Schwarz preconditioner for interior penalty discontinuous Galerkin (IPDG) discretizations of anisotropic elliptic problems. The preconditioned method coupled with a Krylov subspace iteration is shown to be independent of the highly discontinuous (and anisotropic) jump coefficients as well as the subdomain size. To increase efficacy, various auxiliary spaces are considered to reduce the size of the coarse grid operator. We demonstrate how to modify the additive Schwarz preconditioner such that it is applicable to the nonsymmetric IPDG schemes. Several numerical experiments verify the theory and validate the robustness of the preconditioner.
\end{abstract}

\begin{keywords}
Domain decomposition, Discontinuous Galerkin, 
High-order, Elliptic PDEs, Additive Schwarz methods, Adaptive coarse spaces, Heterogeneous coefficients
\end{keywords}

\begin{MSCcodes}
\end{MSCcodes}

\section{Introduction}
	Finite element discretizations of elliptic partial differential equations (PDEs) generally give rise to large sparse linear systems of equations. To obtain approximations for large scale problems in reasonable time frames, fast solvers are mandatory. Among the most efficient linear solvers for elliptic PDEs known to date are the class of multilevel solvers. Broadly speaking, these multilevel solvers are either of domain decomposition (DD) or multigrid type \cite{bramble2019multigrid,toselli2004domain}. In this paper, we focus on a two-level additive Schwarz preconditioner.
	
	The continuous Galerkin (CG) method is arguably the most popular or best studied finite element discretization. As such, extensive research has been done to design and analyze efficient domain decomposition solvers for CG discretizations \cite{brenner2007mathematical,toselli2004domain}. However, in recent years, the discontinuous Galerkin method (DG) has garnered interest from the research community thanks to the flexibility of the nonconforming discretization and completely discontinuous approximation. The reader is referred to \cite{riviere2008discontinuous} for additional details about DG methods. The earliest work on DD methods for DG was conducted in \cite{feng2001two}, where the primary focus was on elliptic problems with smoothly varying coefficients. DG methods give rise to significantly larger linear systems compared to CG, and applications with highly varying discontinuous coefficients can further exacerbate conditioning.
	
	For many applications, it is desirable for preconditioners to have convergence rates independent of mesh size as well as material parameters. As an example, for Darcy flow, it is not unusual for permeability to vary in a discontinuous manner several orders of magnitude \cite{helmig1997multiphase}. This gives rise to very poorly conditioned linear systems after discretization, which require scalable solvers. In the case of highly varying (discontinuous) coefficients, several robust DD solvers exist \cite{galvis2018overlapping,toselli2004domain}. Traditionally, the most effective methods are usually overlapping (with generous overlap). However, overlapping methods reduce parallelism and appropriate overlaps (partitions of unity) increase computational cost, especially in 3D.  Robust DD methods for highly varying coefficients based on nonoverlapping partitions have also been successful, for instance balancing domain decomposition by constraints (BDDC) and finite element tearing and interconnect (FETI) \cite{dryja2008balancing,dryja2015deluxe,klawonn2000feti}. BDDC and FETI are supported by strong theoretical evidence, and have been demonstrated to work well in practice, but are more intricate and complicated than additive average Schwarz type methods \cite{bjorstad1997additive}.
 
	These nonoverlapping DD methods (often referred to as substructuring \cite{toselli2004domain}) partition the domain into nonoverlapping regions, which is typically more natural for mesh and mesh partitioning software \cite{geuzaine2009gmsh,karypis1997metis}. To that end, various nonoverlapping DD solvers for DG methods have been examined, for instance, \cite{dryja2007bddc,dryja2008balancing,dryja2010n,dryja2013feti,dryja2015analysis,dryja2016massively,dryja2010additive}. Most of these works assume that the coefficients are constants, piecewise constants with respect to some partition of the domain, or have a ``quasi-monotonicity'' property \cite{bjorstad1997additive}. It is generally known that the choice of coarse space is very important to ensure a robust and scalable solver; however, finding these coarse spaces can be challenging.  
	
	A significant breakthrough for DD methods with highly discontinuous coefficients was made when effective coarse spaces were found \cite{dolean2015introduction}. In particular, the so-called adaptive spectral coarse spaces were deemed to be extremely robust \cite{dolean2015introduction,liu2020two,spillane2014abstract,yu2021additive,yu2024family}. These spaces are formed by considering specific generalized eigenvalue problems on each subdomain defined by Dirichlet-to-Neumann mapping, or other suitable bilinear forms \cite{bastian2022multilevel,dolean2012analysis,efendiev2012robust,eikeland2017adaptively, galvis2010domain,graham2007domain,nataf2010two,nataf2011coarse,scheichl2007additive,spillane2014abstract,willems2014robust}.  The idea for considering nonstandard coarse spaces have been investigated earlier as well \cite{chartier2003spectral,sarkis2003partition}.
 
 Despite the success of the spectral coarse space, to date, few works study their application to DG methods \cite{bastian2022multilevel,eikeland2017adaptively,liu2020two,yu2021additive}. In \cite{yu2021additive}, a spectral additive Schwarz method was studied for a piecewise constant hybridizable discontinuous Galerkin scheme \cite{cockburn2016static}. Here the coarse space is easier to construct due to its algebraic formulation, but only the lowest order scheme is considered.
  The works in \cite{eikeland2017adaptively,liu2020two} pose a specific Dirichlet-to-Neumann mapping to define the generalized eigenvalue problems on each subdomain. The choice of an effective Dirichlet-to-Neumann can be delicate.  The primary focus of \cite{bastian2022multilevel} was to examine multilevel (more than two) DD methods, although they consider higher order DG schemes, the methods therein require partitions of unity.
	 
	In this paper, we propose a nonoverlapping spectral additive Schwarz method for interior penalty DG schemes. To build the coarse space, only information from the original discretization is required, rendering the method more algebraic (no Dirichlet-to-Neumann mapping is needed, similar to \cite{yu2021additive}). As the DG method gives rise to a proliferation of unknowns, alternative coarse spaces as well as strategies to address large coarse spaces are discussed. Moreover, to the best of our knowledge, we also introduce the first study of spectral coarse spaces for nonsymmetric interior penalty discontinuous Galerkin schemes. Also, we test our method on anisotropic problems.
 
	The paper is organized as follows. \Cref{sec:prelim} defined and clarifies the preliminary information required to describe the DG method, and domain decomposition. In \Cref{sec:matrix_coarse}, the algebraic formulation and explicit construction of the coarse space is presented. This section also discusses various alternative coarse space options as well as modifications required to apply the preconditioner to nonsymmetric problems. Theoretical properties of the preconditioner are rigorously established in \Cref{sec:theory}; of particular interest is the independence of the condition number of the preconditioned Schwarz operator with respect to the subdomain size and highly discontinuous coefficients. Verification and validation of the preconditioner is done in \Cref{sec:num_ex}. Several numerical experiments are conducted to test the robustness and efficiency of the solver. Finally, the findings of the paper are summarized in the conclusion.

\section{Preliminaries and notations} \label{sec:prelim}
We consider a bounded polygonal domain $\Omega \subset \mathbb{R}^d$ for $d=2,3$. The boundary of the domain $\partial\Omega$ is grouped into two disjoint sets, $\partial\Omega = \Omega_D \cup \Omega_N$ (Dirichlet and Neumann boundaries, respectively). Let $\bf n$ be the unit normal vector to the boundary exterior to $\Omega$. We assume that $f\in L^2(\Omega)$, $g_D\in H^{1/2}(\partial\Omega_D)$, $g_N\in H^{1/2}(\partial\Omega_N)$, and $\K$ is a matrix-valued function ${\K}= (k_{ij})_{1\le i,j\le d}$ that is symmetric positive definite and uniformly bounded above and below. That is, for all $x\in \mathbb{R}^d$, there exists constants $K_0$ and $K_1$ such that $K_0 x \cdot x \le \K x \cdot x \le  K_1 x \cdot x$. In this paper, we are particularly interested in the case where $\K$, when restricted to an element, is a constant matrix, but the magnitude of the values of the matrix can jump rapidly. For instance, if $E_1,E_2\in \mathcal{T}_h$ share a facet, $\max_{1\le i,j,\le d} (\K^{E_1})_{ij} \ll \max_{1\le i,j,\le d} (\K^{E_2})_{ij}.$ Although we allow $\K$ to be highly heterogeneous, we can always scale the problem so that $\max_{1\le i,j,\le d} (\K^{E})_{ij} \ge 1$ for all $E\in \mathcal{T}_h$.

The model problem is the Poisson equation:
\begin{subequations}\label{eq:model}
\begin{align}
-\nabla \cdot ( \K \nabla u ) + \alpha u & = f, &&\textrm{ in } \Omega
\\
u &= g_D, &&\textrm{ on } \partial\Omega_D,
\\
\K \nabla u \cdot {\bm n} &=g_N, &&\textrm{ on } \partial\Omega_N.
\end{align} 
\end{subequations}
 
Let $\mathcal{T}_h$ be a shape-regular triangulation (see \cite{toselli2004domain}) of the domain $\Omega$. In this work, we assume that the mesh comprises of simplicial elements (triangles in 2D, and tetrahedra in 3D). The mesh skeleton is denoted by $\mathcal{E}_h$ contains all the facets (edges in 2D, faces in 3D) of the mesh. As both Dirichlet and Neumann boundary conditions are possible, the set $\mathcal{E}_h$ is further partitioned as $\mathcal{E}_h =\mathcal{E}_h^\circ \cup \mathcal{E}_h^D \cup \mathcal{E}_h^N $, where $\mathcal{E}_h^D\subset  \partial\Omega_D$, $\mathcal{E}_h^N\subset  \partial\Omega_N$, and $\mathcal{E}_h^\circ$ are the interior facets.

\subsection{The DG scheme}
Let $k$ be a positive integer. The discontinuous finite element space is defined as
\[
V_h:= V_h(\Omega) = \{ v\in L^2(\Omega): \forall E \in \mathcal{T}_h,~~v|_{T} \in \mathcal{P}_k(E)\},
\]
where $\mathcal{P}_k(E)$ is the space of polynomials of total degree less than or equal to $k$.

These functions in $V_h$ are double-valued on interior facets $e\in \mathcal{E}_h^\circ$. Let $v^{\pm}$ on an interior face $e\in \mathcal{E}_h^\circ$ be the restriction of $v$ to $E^{\pm}$ ($e\in E^-\cap E^+$). The jump and weighted average for $v\in V_h$ on $e\in \mathcal{E}_h^\circ$ are given by
\[
[v] = v^- - v^+,
\quad
\{v\}_{\omega} = \omega^- v^- + \omega^+ v^+,
\]
where $\omega^{\pm}\ge0$ and they are defined as follows:
\begin{equation}
    \omega^-=\frac{{\bf n}^T\K^{E^+}{\bf n}}{{\bf n}^T\K^{E^+}{\bf n}+{\bf n}^T\K^{E^-}{\bf n}},\quad \omega^+=\frac{{\bf n}^T\K^{E^-}{\bf n}}{{\bf n}^T\K^{E^+}{\bf n}+{\bf n}^T\K^{E^-}{\bf n}}.
\end{equation}
Here $\K^{E^-}$ is the permeability of $E^-$ and $\K^{E^+}$ is the permeability of $E^+$. If $e\in\mathcal{E}_h^D$, the jump and the average for $v\in V_h$ is defined as
\begin{equation}
    [v]=v^+,\quad \{v\}_w=v^+.
\end{equation}
 
 The interior penalty discontinuous Galerkin (IPDG) bilinear form is denote by $a(\cdot,\cdot):V_h\to \mathbb{R}$, and is defined as
 \begin{equation} \label{eq_dg_disc}     
\begin{aligned} 
a(v,w) &:=  \sum_{E\in \mathcal{T}_h}\int_E \K \nabla v \cdot \nabla w-\sum_{e\in\mathcal{E}_h^\circ\cup \mathcal{E}_h^D}\int_e   \{ \K \nabla v \cdot {\bf n} \}_{\omega} [w]
 \\&+
 \epsilon 
 \sum_{e\in\mathcal{E}_h^\circ\cup \mathcal{E}_h^D}\int_e  \{ \K \nabla w \cdot {\bf n} \}_{\omega}  [v]
 +
 \sum_{e\in\mathcal{E}_h^\circ\cup \mathcal{E}_h^D}\int_e \sigma_e [v][w].
\end{aligned}
 \end{equation}
The scalar $\epsilon$ is defined to be the symmetrization parameter. When $\epsilon=-1$, we obtain the symmetric interior penalty method (SIPG), $\epsilon= 1$, gives rise to the nonsymmetric interior penalty method (NIPG), and $\epsilon = 0$, gives rise to the incomplete interior penalty method (IIPG). We note that NIPG and IIPG are nonsymmetric, but are commonly used in many applications (for a nonexhaustive list:  \cite{bastian2014fully,bastian2012algebraic,dolejvsi2008semi, fabien2022numerical,li2015high,liu2009modeling,sun2005symmetric}).

Let ${\bm n}$ be the outward normal associated with facet $e\in \mathcal{E}_h^\circ$. For $e \in E^+\cap E^-$, $E^{\pm}\in \mathcal{T}_h$, we let $\delta_{K_n}^{\pm} = {\bm n}^T {\K^{E^{\pm}}} {\bm n}$ and for $e\in \mathcal{E}_h^D \cup \mathcal{E}_h^N$, set $\delta_{K_n}  = {\bm n}^T \K^E {\bm n}$. The penalty parameter $\sigma_e$ is given by
\begin{equation} 
\sigma_e = 
\begin{cases}
 \sigma \frac{2 k(k+d-1) \delta_{K_n}^- \delta_{K_n}^+ }{\delta_{K_n}^- + \delta_{K_n}^+} \frac{|e|}{\min\{ |E^+|, |E^-|\} }    &  \forall e \in \mathcal{E}_h^\circ
\\
\sigma
k(k+d-1) \delta_{K_n} \frac{|e|}{|E|}
&  \forall e \in \mathcal{E}_h^D \cup \mathcal{E}_h^N
\end{cases}
\label{eq:dg_penalty}
, 
\end{equation}
where $\sigma$ is a nonnegative user-defined parameter. The weighted IPDG schemes with penalty parameters dependent on the anisotropic diffusivity have been deemed important for stability and accuracy purposes \cite{epshteyn2007estimation,ern2009discontinuous,hartmann2008optimal}.
 
For any $v\in V_h$, the associated linear function $\ell(\cdot)$ is given by
\begin{align}
\ell(v) &= \sum_{E\in \mathcal{T}_h}\int_E f v  
+
\sum_{e\in \mathcal{E}_h^D}\int_e   ( \K \nabla v \cdot {\bf n} ) g_D
+
\sum_{e\in \mathcal{E}_h^N}\int_e   v g_N.
 \label{eq_dg_disc3}
\end{align} 
The general IPDG finite element method is as follows: seek $u_h\in V_h$ such that
\begin{align} 
a(u_h,v_h) = \ell(v_h),\quad \forall v_h \in  V_h.
 \label{eq_dg_disc2}
\end{align}
The associated DG norm for $v_h \in V_h$ is given by 
\begin{equation} 
\| v_h\|_{DG}^2 = 
\sum_{E\in \mathcal{T}_h}\int_E \K \nabla v_h \cdot \nabla v_h
+
 \sum_{e\in\mathcal{E}_h^\circ\cup \mathcal{E}_h^D}\int_e \sigma_e [v_h][v_h].
\label{eqn_dg_norm}
\end{equation}
\subsection{Domain and DG space decomposition}
 The additive Schwarz preconditioner requires a decomposition of the domain. We focus on a nonoverlapping method, so that the mesh $\mathcal{T}_h$ is partitioned into $N$ subdomains, denoted by $\Omega_i$.  That is, we decompose $\Omega$ into $N$ nonoverlapping open polygonal subdomains $\Omega_i$ with diameter $\mathcal{O}(H)$ where
 \[
 \overline{\Omega}
 =
 \bigcup_{i=1}^N \overline{\Omega}_i
 ,
 \quad 
 \textrm{ and } 
 \quad 
 \Omega_i \cap \Omega_j = \emptyset,
  \quad 
  i\neq j.
 \]
  Each subdomain is assumed to be the union of shape regular simplicies. The interface of $\Omega_i$ is denoted by $\Gamma_i$, and the global interface $\Gamma$ given by 
  \[
  \Gamma_i := \partial \Omega_i \backslash \partial\Omega,
  \quad
  \textrm{and} 
  \quad
  \Gamma := \bigcup_{i=1}^N \Gamma_i.
  \]
  See \Cref{fig:subdomain_doodle} for a sample depiction. The local finite element space 
  $V_i$,  for $i=1,2,\ldots,N$, is the restriction of $V_h(\Omega)$ to $\Omega_i$ and vanishes on $\Gamma_i$. 

The extension by zero outside of $\Omega_i$ is given by the operator $\R_i^T: V_i \to V_h(\Omega).$ The operator $\R_i :    V_h(\Omega)\to V_i $ (the adjoint of $\R_i^T$) maps a nodal vector on $\Omega$ to a nodal vector inside $\Omega_i$. As the nodal DG method has duplication of unknowns at the mesh vertices, we consider interface unknowns to be those residing on facets belonging to the interface (see \Cref{fig:subdomain_doodle2}). 
\begin{figure}[htb!]
    \centering

    \subfloat[\centering Unstructured partitioning\label{fig:subdomain_doodle}]{
        \includegraphics[scale=0.5,valign=t]{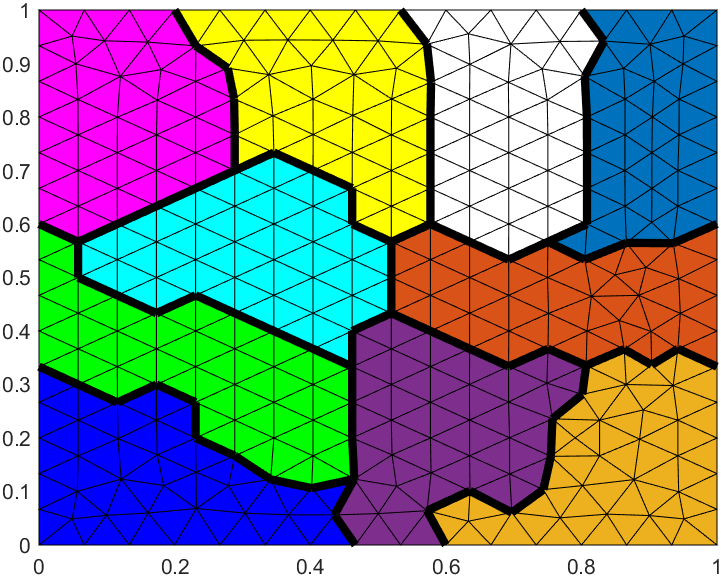}
    }
    \qquad
    \subfloat[\centering Interface unknowns\label{fig:subdomain_doodle2}]{
        \includegraphics[scale=0.9,valign=t]{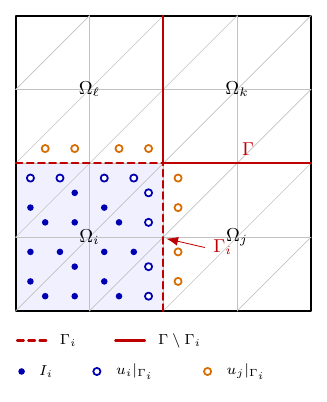}
    }

    \caption{In (a), decomposition of the unit square into 10 nonoverlapping subdomains and global interface (thick solid line). Thin solid lines are element boundaries which lie in the interiors of the subdomain. The domain partitioning does not have to be aligned with varying coefficient $\K$. In (b), the global interface is given by the red line. Illustration of some unknowns that lie on the global interface (circles), and unknowns that are in the subdomain interior (filled circles).}
    \label{fig:example}
\end{figure}

 The coarse space is defined as 
 \[
 V_0 = V_h(\Gamma) := \{v |_e, e\in \mathcal{E}_h \subset \Gamma; \forall v \in V_h(\Omega)\}.
 \]
 We note that $V_h(\Gamma)$ has duplication of unknowns on $\Gamma$.  Then, the DG space $V_h(\Omega)$ can be written as a direct sum:
\[
V_h(\Omega) = \R^T_0 V_0 \oplus\R^T_1 V_1 \oplus \ldots \oplus \R^T_N V_N,
\]
where $\R^T_0:V_0\to V_h(\Omega)$ is defined later.

The DG method \eqref{eq_dg_disc2} gives rise to a linear system of the form
\[
{\bf A} \vec{u} = \vec{b}
\]
in terms of the standard basis functions, where we organized the unknowns in terms of the interface and interior of the subdomains. That is,
\[
\begin{bmatrix} \label{Assembly}
{\bf A}_{\Gamma\Gamma}  & {\bf A}_{\Gamma I} 
\\
{\bf A}_{I\Gamma}  & {\bf A}_{II} 
\end{bmatrix}\begin{bmatrix}
\vec{u}_{\Gamma}
\\
\vec{u}_{I}
\end{bmatrix}  
=
\sum_{i=1}^N \R^{(i)^T}
\begin{bmatrix} 
{\bf A}_{\Gamma\Gamma}^{(i)}  & {\bf A}_{\Gamma I}^{(i)}  
\\
{\bf A}_{I\Gamma}^{(i)}   & {\bf A}_{II}^{(i)}  
\end{bmatrix}
\R^{(i) }
\begin{bmatrix}
\vec{u}^{(i) }_{\Gamma}
\\
\vec{u}^{(i) }_{I}
\end{bmatrix}
=
\sum_{i=1}^N \R^{(i)^T}
\begin{bmatrix}
\vec{b}^{(i) }_{\Gamma}
\\
\vec{b}^{(i) }_{I}
\end{bmatrix},
\]
where it is understood that $\vec{u}_{\Gamma}$ ($\vec{u}_{I}$) is the restriction of $\vec{u}$ on $\Gamma$ ($I=\Omega\backslash\Gamma)$. The terms $\vec{b}^{(i)}_{\Gamma}$ and $\vec{b}^{(i)}_{I}$ represent the restriction of $\vec{b}$, which represents the linear form \eqref{eq_dg_disc3}, to the interface $\Gamma_i$ and to $I_i=\Omega_i\backslash\Gamma_i$, that is the interior degrees of freedom inside $\Omega_i$, respectively. We note that $V_i = V_h(I_i).$

We define several prolongation and restriction operators to map from various topological objects associated with the domain decomposition:
\begin{itemize}
\item  $\R^T_{\Gamma_i}:  V_h(\Gamma_i) \to V_h(\Omega)  $ (extension to $\Omega$ by zero outside $\Gamma_i$),
\item  $\R^T_{i}: V_h(I_i)\to  V_h(\Omega)  $  (extension by zero outside $I_i$),
\item  $\R^T_{\Gamma_i \Gamma}: V_h(\Gamma_i)\to V_h(\Gamma)$ (extension to $\Gamma$ by zero outside $\Gamma_i$),
\item  $\R^T_{I_i I}: V_h(I_i)\to V_h(I)$ (extension to $I$ by zero outside $I_i$),
\item  $\R^{(i)^T} = 
[R^T_{\Gamma_i},R^T_{i}]
=
\begin{bmatrix}
\R^T_{\Gamma_i \Gamma} & 0
\\
0                      & \R^T_{I_i I}
\end{bmatrix}
$
 (prolongation operator from $\Omega_i$ to $\Omega$).
\end{itemize}
We note that $V_h(\Gamma_i):=\{v_i|_{\Gamma_{i}}, v_j|_{\Gamma_{i}}\ \forall v_i\in V_h(\Omega_i), v_j\in V_h(\Omega_j)\ \mbox{and}\  j\in\partial i\}$, where $\partial i$ is the set of indices $j$ such that $\Omega_j$ and $\Omega_i$ share a common interface $\Gamma_{i}$. 

It should be emphasized that the local matrices 
\[
{\bf A}^{(i)}=\begin{bmatrix}
{\bf A}_{\Gamma\Gamma}^{(i)}  & {\bf A}_{\Gamma I}^{(i)}  
\\
{\bf A}_{I\Gamma}^{(i)}   & {\bf A}_{II}^{(i)}  
\end{bmatrix}\]
 are the Neumann matrices corresponding to the restriction of the bilinear form from equation \eqref{eq_dg_disc} to subdomain $\Omega_i$. That is,
\[
a^{(i)}(u_i,v_i) := a(\R^{(i)^T}u_i,\R^{(i)^T}v_i),
\quad \forall u_i,v_i\in V_i,
\quad \text{and}\ 
{\bf A} = \sum_{i=1}^N \R^{(i)^T} {\bf A}^{(i)} \R^{(i) }.
\]
The matrices ${\bf A}^{(i)}$ are sometimes referred to as the unassembled contributions before direct stiffness summation \cite{toselli2004domain} and it represents the following bilinear form
\begin{align*} 
a^{(i)}(v,w) &:=\sum_{E\in \Omega_i}\int_E \K\nabla v \cdot \nabla w 
\\
&+\sum_{ e \textrm{ interior to } \Omega_i}
\bigg( - \int_e \{\K\nabla v \cdot {\bf n}\}_\omega [w] +\epsilon  \{\K\nabla w \cdot {\bf n}\}_\omega [v] + \sigma_e [v][w]\bigg)
\\
&+\sum_{\substack{e \textrm{ on } \partial\Omega_i \cap \Gamma_i,\\
e=E_i\cap E_j}}
\int_{e}
\bigg(
-   \omega_i  (\K^{E_i}\nabla v_i \cdot {\bf n}_{ij})[w^{(i)}] 
+  \epsilon  \omega_i  (\K^{E_i}\nabla w_i \cdot {\bf n}_{ij}) [v^{(i)}] 
\\
&\hspace{3cm}+ 
0.5 \sigma_e [v^{(i)}] [w^{(i)}] 
\bigg),
\end{align*}
where in the last summation term $v_i, w_i\in\Omega_i$, $v^{(i)}, w^{(i)}\in V_h(\Gamma_i)$ and we define
\begin{equation}
    \omega_i=\frac{{\bf n}^T\K^{E_j}{\bf n}}{{\bf n}^T\K^{E_i}{\bf n}+{\bf n}^T\K^{E_j}{\bf n}}.
\end{equation}
The vector $\mathbf{n}_{ij}$ represents the normal vector on $e$ pointing from $E_i$ to $E_j$. Fig.~\ref{fig:example}(b) visualizes a sample domain, its decomposition, and its degree of freedom configuration.

The Schwarz preconditioner is based on the Schur complement. That is, for the problem ${\bf A} \vec{u} = \vec{b}$  we can write
\[
\begin{bmatrix}
{\bf S}  & 0
\\
{\bf A}_{I\Gamma}  & {\bf A}_{II} 
\end{bmatrix}
\begin{bmatrix}
\vec{u}^{(i) }_{\Gamma}
\\
\vec{u}^{(i) }_{I}
\end{bmatrix}
=
\sum_{i=1}^N \R^{(i)^T}
\begin{bmatrix}
{\bf S}^{(i)}  &0 
\\
{\bf A}_{I\Gamma}^{(i)}   & {\bf A}_{II}^{(i)}  
\end{bmatrix}
\R^{(i) }
\begin{bmatrix}
\vec{u}^{(i) }_{\Gamma}
\\
\vec{u}^{(i) }_{I}
\end{bmatrix}
=
\sum_{i=1}^N \R^{(i)^T}
\begin{bmatrix}
\widehat{b}_\Gamma
\\
\vec{b}^{(i) }_{I}
\end{bmatrix},
\]
 where
 \begin{align*}
 \widehat{b}_\Gamma^{(i) } &= \vec{b}^{(i) }_{\Gamma} - {\bf A}_{ \Gamma I}^{(i)}({\bf A}_{II}^{(i)})^{-1}\vec{b}^{(i) }_{I},
 \\
 {\bf S}^{(i)} &= {\bf A}_{ \Gamma \Gamma }^{(i)}-   {\bf A}_{ \Gamma I}^{(i)}({\bf A}_{II}^{(i)})^{-1}{\bf A}_{ I\Gamma  }^{(i)},
 \\
 {\bf S} &= \sum_{i=1}^N \R^{(i)^T}_{\Gamma_i\Gamma} {\bf S}^{(i)}\R^{(i) }_{\Gamma_i\Gamma}
 \\
 \widehat{b}&= \sum_{i=1}^N \R^{(i)^T}_{\Gamma_i\Gamma}  \widehat{b}^{(i) }
 .
 \end{align*}
 The Schur complement results in problem of reduced size (for the unknowns on the interface),
 \[
  {\bf S}  \vec{u}_\Gamma = \widehat{b}.
 \] 
 The $a$-discrete harmonic operator $\mathcal{H}:V_0\to V_h(\Omega)$ is given by
 \[
  \mathcal{H} u_\Gamma = 
  \begin{cases}
  u_\Gamma, &\mbox{ on } \Gamma 
  \\
  -\sum_{i=1}^N  
   \R^T_{I_i I}({\bf A}^{(i)}_{II})^{-1}{\bf A}^{(i)}_{I\Gamma}
   \R_{\Gamma_i \Gamma}
  u_\Gamma, &\mbox{ on }  I
  \end{cases}.
 \]
We can then define the bilinear form for the Schur complement. For all $u_\Gamma,v_\Gamma \in V_0$,
\[
s(u_\Gamma,v_\Gamma) = v_\Gamma^T {\bf S} u_\Gamma = a( \mathcal{H} u_\Gamma , \mathcal{H} v_\Gamma ).
\]
If we selected $\R_0^T =  \mathcal{H}$, then the course problem would coincide with the Schur complement, but this is too costly in practice. Instead, we design $\R_0^T$ so that the coarse grid bilinear form
\begin{equation} 
a_0(u_\Gamma,v_\Gamma):= a(\R_0^Tu_\Gamma,\R_0^Tv_\Gamma)
\label{eq_coarse_grid_bilinear}
\end{equation}
 is spectrally equivalent to $s(u_\Gamma,v_\Gamma)$ independent of the heterogeneity from $\K$; in some subspaces (to be determined). In other words, $\R_0^T$ is a lower-rank approximation to $\mathcal{H}$.

\subsection{Spectral coarse space} \label{sec:coarse_space}
In this section we describe $\R_0^T$ and its construction. The so-called adaptive spectral coarse spaces are utilized \cite{dolean2015introduction,liu2020two,spillane2014abstract,yu2021additive,yu2024family}. These spaces are formed by considering specific generalized eigenvalue problems on each subdomain defined by Dirichlet-to-Neumann mapping \cite{graham2007domain,nataf2010two,nataf2011coarse,scheichl2007additive,spillane2014abstract,willems2014robust}, or other suitable bilinear forms  \cite{bastian2022multilevel,dolean2012analysis,efendiev2012robust,eikeland2017adaptively,galvis2010domain}.

The process of constructing spectral coarse spaces is typically done in two steps. First, in each subdomain, a local generalized eigenvalue problem is solved, where eigenfunctions corresponding to the small eigenvalues by a given threshold eigenvalue are retained. Second, the subdomain contributions of said eigenfunctions are combined via an appropriate partition of unity to arrive at a global coarse space. Such spectral coarse spaces result in very robust preconditioners which are independent of the number of subdomains as well as the heterogeneity of the coefficients $\K$ \cite{dolean2015introduction}.

The following generalized eigenvalue problems are considered on each subdomain:
\[
{\bf S}^{(i)}\xi_j^{(i)}
=
({\bf A}_{\Gamma\Gamma}^{(i)} - {\bf A}_{\Gamma I}^{(i)} 
({\bf A}_{II}^{(i)})^{-1}
{\bf A}_{I\Gamma}^{(i)}) 
\xi_j^{(i)}
=
\lambda_j^{(i)} {\bf B}_{\Gamma\Gamma}^{(i)} \xi_j^{(i)},   
\]
for $i=1,2,\ldots,N$, and $j=1,2,\ldots n_i$, where $n_i$ are the number of degrees of freedom on $\Gamma_i$. We have several choices for the matrix ${\bf B}_{\Gamma\Gamma}^{(i)}$:
\begin{itemize}
    \item \({\bf B}_{\Gamma\Gamma}^{(i)}\) can be \({\bf A}_{\Gamma\Gamma}^{(i)}\).

    \item \({\bf B}_{\Gamma\Gamma}^{(i)}\) can be the block diagonal of
    \({\bf A}_{\Gamma\Gamma}^{(i)}\), denoted by
    \(\widehat{\bf A}_{\Gamma\Gamma}^{(i)}\). Then
    \(\widehat{\bf  A}_{\Gamma\Gamma}\), the assembly of
    \(\widehat{\bf A}_{\Gamma\Gamma}^{(i)}\), is the same as the block diagonal
    of \({\bf A}_{\Gamma\Gamma}\).

    \item \({\bf B}_{\Gamma\Gamma}^{(i)}\) can be the block diagonal of
    \({\bf S}^{(i)}\), denoted by \(\widehat{{\bf S}}_{\Gamma\Gamma}^{(i)}\). Then
    \(\widehat{\bf S}_{\Gamma\Gamma}\), the assembly of
    \(\widehat{\bf S}_{\Gamma\Gamma}^{(i)}\), is the same as the block diagonal
    of \({\bf S}_{\Gamma\Gamma}\).
\end{itemize}
For piecewise linear DG ($k=1$), by a block, we mean that each block is associated with either an edge or a corner of \(\Gamma\). The size of each edge block is the number of nodes on the edge from both sides, excluding the endpoints. For a structured subdomain, the size of each corner block is \(3 \times 3\).

The choice of ${\bf B}_{\Gamma\Gamma}^{(i)}$ can increase or decrease the computational cost of the method. As we will see below, ${\bf B}_{\Gamma\Gamma}^{(i)}$ influences the coarse grid operator, and. Moreover, it turns out that the choice ${\bf B}_{\Gamma\Gamma}^{(i)} = \widehat{{\bf S}}_{\Gamma\Gamma}^{(i)}$ is very robust, but requires assembly of the local Schur complement ${\bf S}^{(i)}$.


Notice that each of these eigenvalue problems are independent from one another. Moreover, since since the ${\bf S}^{(i)}$ are symmetric positive semidefinite and all the three choices of $ {\bf B}_{\Gamma\Gamma}^{(i)}$ are symmetric positive definite, we have $0\le \lambda_j^{(i)}$. Since  ${\bf S}^{(i)}$ is related to minimum energy extension while 
${\bf A}_{\Gamma\Gamma}^{(i)}$ is related to zero trivial extension, we have $\lambda_j^{(i)} \leq 1$, while for the other two choices we will show that $\lambda_j^{(i)} \leq O(1)$. 

The spectral coarse space begins by defining a threshold parameter $\eta = \mathcal{O}(\frac{h}{H})$. We then keep the $k_i$ eigenvalues smaller than $\eta$ in each subdomain $\Omega_i$ (low frequency modes). Associated with each subdomain, we can rewrite the generalized eigenvalue problem as
\[
\begin{bmatrix}
{\bf S}^{(i)} & 0 
\\
 {\bf A}_{I\Gamma}^{(i)}             & {\bf A}_{II}^{(i)}
\end{bmatrix}
\begin{bmatrix}
{\bf Q}^{(i)}
\\
{\bf P}^{(i)}
\end{bmatrix}
=
\begin{bmatrix}
 {\bf B}_{\Gamma\Gamma}^{(i)}& 0 
\\
0& 0
\end{bmatrix}
\begin{bmatrix}  
{\bf Q}^{(i)}{\bf D}^{(i)}
\\
{\bf P}^{(i)}{\bf D}^{(i)}
\end{bmatrix}
,
\]
where ${\bf P}^{(i)} = - ({\bf A}_{II}^{(i)})^{-1} {\bf A}_{I\Gamma}^{(i)} {\bf Q}^{(i)}$, is the best low-rank ($k_i$) subspace of $V_i$, ${\bf Q}^{(i)}  = [\xi_1^{(i)},\ldots,\xi_{k_i}^{(i)}]$ contains the generalized eigenvectors, and ${\bf D}^{(i)}=\textrm{ diag}(1-\lambda_1,\ldots,1-\lambda_{k_i})$.

We can now define the local extension operators $\R_0^{(i)^T}: V_h(\Gamma_i) \to V_h(\Omega_i)$
\[
  \R_0^{(i)^T} u_{\Gamma_i} = 
  \begin{cases}
  u_{\Gamma_i}, &\mbox{ on } \Gamma_i
  \\
  -{\bf P}^{(i)}({\bf Q}^{(i)^T} {\bf B}_{\Gamma\Gamma}^{(i)}{\bf Q}^{(i)})^{-1}
  {\bf Q}^{(i)^T}
  {\bf B}_{\Gamma\Gamma}^{(i)}
  u_{\Gamma_i}, 
  &\mbox{ on }  \Omega_i^\circ
  \end{cases},
\]
and $\R_0^{T}: V_h(\Gamma) \to V_h(\Omega)$
\[
  \R_0^{T} u_\Gamma = 
  \begin{cases}
  u_{\Gamma }, &\mbox{ on } \Gamma 
  \\
  -\sum_{i=1}^N
  \R_{I_i I}^T {\bf P}^{(i)}({\bf Q}^{(i)^T} {\bf B}_{\Gamma\Gamma}^{(i)}{\bf Q}^{(i)})^{-1}
  {\bf Q}^{(i)^T}
  {\bf B}_{\Gamma\Gamma}^{(i)}
  \R_{\Gamma_i \Gamma} u_{\Gamma }, 
  &\mbox{ on }  \Omega^\circ
  \end{cases},
\]
where $\Omega^\circ$ is the interior of $\Omega.$
\section{Matrix formulation of the spectral coarse space} \label{sec:matrix_coarse} 
   The coarse problem defined by the bilinear form $a_0(\cdot,\cdot)$ in equation \eqref{eq_coarse_grid_bilinear} can then be assembled as follows:
   \[
   {\bf A}_0 =
   \sum_{i=1}^N
   \R_{\Gamma_i \Gamma}^T
   \bigg(
   {\bf B}_{\Gamma\Gamma}^{(i)}
   -
{\bf B}_{\Gamma\Gamma}^{(i)}   
{\bf Q}^{(i)}
{\bf D}^{(i)}   
   ({\bf Q}^{(i)^T} {\bf B}_{\Gamma\Gamma}^{(i)}{\bf Q}^{(i)})^{-1}
   {\bf Q}^{(i)^T}
  {\bf A}_{\Gamma\Gamma}^{(i)}
  \bigg)
   \R_{\Gamma_i \Gamma}.
   \]
   Let $\R_{\lambda_i}:\textrm{span}( \{{\bf Q}^{(1)},\ldots,{\bf Q}^{(N)}\} )\to \textrm{span}({\bf Q}^{(i)})$ be a restriction operator that extracts the $k_i$ components corresponding to $\Omega_i$. As an example, let $N_E=\sum_{i=1}^N k_i$ be the total number of eigenvectors selected over all subdomains. The vector $\vec{u}=[u_{1,1},\ldots, u_{1,k_1},\ldots,u_{N,1},\ldots, u_{N,k_N}]^T$ has $N_E$ components, and the vector $\R_{\lambda_i}\vec{u}=[u_{i,1},\ldots, u_{i,k_i}]^T$ has $k_i$ components. The coarse matrix ${\bf A}_0$ can be assembled using information from the generalized eigenvalue problem on each subdomain. We define the following matrices: 
   \begin{equation*}
   \begin{alignedat}{3}
   &{\bf B}_{\Gamma\Gamma} = \sum_{i=1}^N \R_{\Gamma_i \Gamma}^T {\bf B}_{\Gamma\Gamma}^{(i)} \R_{\Gamma_i \Gamma},&&
   \quad{\bf U} =  \sum_{i=1}^N \R_{\Gamma_i \Gamma}^T {\bf B}_{\Gamma\Gamma}^{(i)}{\bf Q}^{(i) } \R_{\lambda_i}, \\ 
   &{\bf D} =  \sum_{i=1}^N \R_{\lambda_i}^T {\bf D}^{(i)} \R_{\lambda_i}, &&
   \quad{\bf C} =  \sum_{i=1}^N \R_{\lambda_i}^T 
   ({\bf Q}^{(i)^T} {\bf B}_{\Gamma\Gamma}^{(i)}{\bf Q}^{(i)})^{-1}
    \R_{\lambda_i}, \\
    &{\bf P} =  \sum_{i=1}^N \R_{I_i I}^T 
    {\bf P}^{(i)} 
    \R_{\lambda_i}.
    \end{alignedat}
    \end{equation*}
    
    The coarse problem can then be expressed as
    \begin{align} 
     {\bf A}_0  \vec{u}_\Gamma =
    \R_0 {\bf A} \R_0^T\vec{u}_\Gamma = \R_0\vec{b}
\implies    
    ({\bf B}_{\Gamma\Gamma} - {\bf U} {\bf D}{\bf C} {\bf U}^T) \vec{u}_\Gamma = \vec{b}_\Gamma + {\bf U} {\bf C}{\bf P}^T \vec{b}_I. 
    \label{eq:coarse_operator}
    \end{align}
    As such, no Galerkin triple product of the form $\R_0 {\bf A} \R_0^T$ is required to form the coarse space operator.
    

\subsection{Preconditioner application} \label{sec:precond1}
With the definition of the coarse space in \Cref{sec:coarse_space}, we can define the two-level additive Schwarz preconditioner for the SIPG scheme. The additive Schwarz method is not used as a stand-alone solver, but rather a preconditioner for a Krylov subspace method \cite{saad2003iterative} (see \Cref{alg:two}).
\begin{algorithm}
\caption{The preconditioned conjugate gradient method}\label{alg:two}
\textbf{Input: }{${\bf A}, \vec{x}_0,\vec{b}, \texttt{tol}$}
\\
\textbf{Output: }{$\vec{y}$ (where ${\bf A}\vec{y} \approx \vec{b} $)} 
\\
$\vec{r}_0 := \vec{b}- {\bf A} \vec{x}_0$
\\
Solve ${\bf M} \vec{z}_0 = \vec{r}_0$
\\
$\vec{p}_0 = \vec{z}_0$
\\
$n=0$
\\
\textbf{while} { $\|\vec{r}_{n}\|_2 > \texttt{tol}$ }
\\ 
\indent
$\quad\quad\quad \gamma_n : = \frac{\vec{r}_n^T\vec{z}_n}{\vec{p}_n^T{\bf A}\vec{p}_n}$
\\
\indent
$\quad\quad\quad\vec{x}_{n+1} := \vec{x}_{n } + \gamma_n  \vec{p}_n$
\\
\indent
$\quad\quad\quad\vec{r}_{n+1} := \vec{r}_{n } - \gamma_n  {\bf A}\vec{p}_n$
\\
\indent
 $\quad\quad\quad$ \textbf{if} $\|\vec{r}_{n+1}\|_2 <= \texttt{tol}$  \textbf{then} exit loop \textbf{end if}
\\
\indent
$\quad\quad\quad$ Solve ${\bf M} \vec{z}_{n+1} = \vec{r}_{n+1} $
\\
\indent
$\quad\quad\quad \beta_n := \frac{\vec{r}_{n+1}^T\vec{z}_{n+1}}{\vec{r}^T_n\vec{z}_n}$
\\
\indent
$\quad\quad\quad \vec{p}_{n+1} := \vec{z}_{n+1} + \beta_n \vec{p}_{n}$
\\
\indent
$\quad\quad\quad n:=n+1$
\\
\indent
\textbf{end while}
\end{algorithm}
The preconditioner matrix $\bf M$ is not formed in practice. Instead, the application of the additive Schwarz preconditioner is performed as follows. Given a residual $\vec{r}_{n+1}$, we obtain $\vec{z}_{n+1}$ (see \Cref{alg:two}) by
\[
\vec{z}_{n+1} = 
\R_0^T {\bf A}_0^{-1} \R_0\vec{r}_{n+1}
+
\sum_{i=1}^N \R^{(i)^T } ({\bf A}_{II}^{(i)})^{-1}\R^{(i)} \vec{r}_{n+1},
\]
where we observe that the coarse grid correction and local solvers are completely data parallel.
 
\subsection{Extension to NIPG and IIPG}\label{sec:non_sym}
The NIPG and IIPG discretizations give rise to a nonsymmetric matrix, which causes challenges when designing preconditioners. To circumvent this issue for NIPG and IIPG, we instead consider the symmetric part of the matrix. That is, if $ {\bf A}$ is the DG discretization matrix, we build the preconditioner with respect to the matrix ${\bf A}_{\texttt{sym}} = (1/2)({\bf A} + {\bf A}^T)$. The idea of preconditioning NIPG and IIPG using the symmetric part of the matrix has been explored in several works \cite{ayuso2014multilevel,ayuso2009uniformly,johannsen2005symmetric}.

The coarse space and local problems are formed using the symmetrized bilinear forms and matrix ${\bf A}_{\texttt{sym}}$. This allows us to reuse the generalized eigenvalue problem posed in \Cref{sec:coarse_space}. Moreover, the global coarse grid operator is rendered symmetric as a consequence. For the outer Krylov solver, we use either biconjugate gradient method (BICG) or generalized minimum residual method (GMRES) \cite{saad2003iterative}. We emphasize that the outer Krylov solver is applied to the original discretization matrix $\bf A$ (resulting from NIPG or IIPG), and ${\bf A}_{\texttt{sym}}$ is only utilized to form the preconditioner (local solvers and global coarse grid matrix).
 
%

\subsection{Further reduction of the coarse space} \label{sec:coarse_space2}
The coarse problem is defined on $V_h(\Gamma)$.  In terms of the global coarse matrix~\eqref{eq:coarse_operator} we have
\[
{\bf S}_\Gamma
= {\bf B}_{\Gamma\Gamma} - {\bf U}{\bf D}{\bf C}{\bf U}^T.
\]
We have a few options if the coarse problem is too large. The first option works with ${\bf B}_{\Gamma\Gamma}$ 
directly, leveraging the observation that ${\bf B}_{\Gamma\Gamma}$ is much sparser than ${\bf S}_\Gamma$. The second option considers searching for a coarse grid correction in the conforming continuous Galerkin (CG) auxiliary space $V_{\text{CG}}(\Gamma)=\{v\in C^0(\Omega): v|_E\in\mathcal{P}_1 ,~\forall E\in \mathcal{T}_h\}.$
\begin{enumerate} 
\item  The matrix ${\bf A}_{\Gamma\Gamma}$ is much sparser than ${\bf S}_\Gamma$. Hence,
\begin{align} 
{\bf S}_\Gamma^{-1} 
=
{\bf B}_{\Gamma\Gamma}^{-1} 
+
{\bf B}_{\Gamma\Gamma}^{-1} 
{\bf U}
(
{\bf C}^{-1}{\bf D}^{-1}
-
{\bf U}^T{\bf B}_{\Gamma\Gamma}^{-1}{\bf U} 
)^{-1}
{\bf U}^T{\bf B}_{\Gamma\Gamma}^{-1}.
\label{eq:woodbury}
\end{align}
The matrix $(
{\bf C}^{-1}{\bf D}^{-1}
-
{\bf U}^T{\bf B}_{\Gamma\Gamma}^{-1}{\bf U} 
)$ is a $N_E\times N_E$ matrix, where $N_E = \sum_{j=1}^{N } k_j$ (where $k_j$ eigenvalues from subdomain $j$).
\item Note that ${\bf B}_{\Gamma\Gamma}$ is much sparser than ${\bf S}_\Gamma$, but these two matrices have the same dimensions. We can considerably reduce the size of the coarse space by considering an auxiliary space $V_{\text{CG}}(\Gamma) \subset V_h(\Gamma)$. We define a mapping $\R_{\text{DG}}^{\text{CG}}:V_{\text{DG}}(\Gamma)\to V_{\text{CG}}(\Gamma)$ (boolean operator). This can be easily formed since we have access to the fine-grid mesh.  In more detail, if  $ V_{\text{CG}}(\Omega)= \text{span}(\psi_1,\ldots,\psi_{n_0}) $ and $ V_h(\Omega)= \text{span}(\phi_1,\ldots,\phi_{n}) $, then we can write any CG basis function $\psi_i$ in terms of a linear combination the DG basis functions $\phi_j$, because $ V_{\text{CG}}(\Omega) \subset V_h(\Omega)$:
\[
\psi_i = \sum_{j=1}^n  (\R_{\text{DG}}^{\text{CG}})_{i,j} \phi_j.
\]
We summarize the subtle points about this approach:
\begin{enumerate}
\item  Local solvers are defined in terms of the DG bilinear form.
\item  The intermediate coarse problem ${\bf S}_\Gamma$, is defined in terms of the DG bilinear form and the generalized eigenvalue problems.
\item The CG coarse problem is defined as $\tilde{{\bf S}}_\Gamma = R_{\text{DG}}^{\text{CG}} {\bf S}_\Gamma (R_{\text{DG}}^{\text{CG}})^T$. This linear system is significantly smaller than $ {\bf S}_\Gamma$.
\item After the NOSAS application, we apply $\nu_2$ post-relaxation steps (say weighted block Jacobi). This is to resolve any issues with the auxiliary space projection.
\end{enumerate} 


\item Instead of using the Galerkin triple product $\tilde{{\bf S}}_\Gamma = R_{\text{DG}}^{\text{CG}} {\bf S}_\Gamma (R_{\text{DG}}^{\text{CG}})^T$, we can consider rediscretization. Meaning, the problem \eqref{eq:model} is discretized using the standard $\mathcal{P}_1$ continuous finite element method, and the resulting matrix is used as the coarse problem.

\item If the bilinear forms affiliated with the generalized eigenvalue problems are selected in a specific way (e.g., Dirichlet-to-Neumann map), it is possible to obtain a robust spectral coarse space defined on the eigenfunction space $\text{span}\{ {\bf Q}^{(1)},\ldots, {\bf Q}^{(N)} \}$ (as opposed to the interface space, $V_h(\Gamma)$) \cite{liu2020two,nataf2010two,nataf2011coarse}.

\item Multi-level additive Schwarz. Recursively introduce additional levels to extend the scalability of the method \cite{bastian2022multilevel,yu2021additive}.
\end{enumerate}

\section{Theoretical results} \label{sec:theory}
To estimate the condition number of the proposed preconditioner, we follow the abstract theory of Schwarz Methods \cite{spillane2014abstract,toselli2004domain}. Essentially, this reduces to the verification of three key assumptions, which we state below for convince.
\begin{assumption}[Strengthened Cauchy-Schwarz Inequalities] \label{assumption1}
There exists constants $0\le \varepsilon_{ij}\le 1$, $1\le i \le N$ such that
\[
|a(\R_i^T u_i,\R_j^T u_j)| \le  \varepsilon_{ij}a(\R_i^T u_i,\R_i^T u_i)^{1/2} a(\R_j^T u_j,\R_j^T u_j)^{1/2},
\]
for $u_i\in V_h^i$, and $u_j\in V_h^j.$ 
\end{assumption} 

\begin{assumption}[Local Stability] \label{assumption2}
There exists a constant $\omega>0$ such that
\[
a(\R_i^T u_i,\R_i^T u_i) \le \omega a_i(  u_i, u_i),\quad \forall u_i \in V_h^i,\quad 0\le i\le N.
\] 
\end{assumption} 

\begin{assumption}[Stable Decomposition] \label{assumption3}
	There exists a constant $C_0$, such that every $u\in V_h$ admits a decomposition
	\[
	u = \sum_{i=0}^N \R_i^T u_i,\quad u_i \in V_h^i,\quad 0\le i \le N,
	\]
	that satisfies
	\[
	\|\R_0^T u_0\|_{DG}^2 +\sum_{i=1}^N \|\R_i^T u_i\|_{DG}^2 \le C_0^2 \| u\|_{DG}^2,
	\]
    where $\|\cdot \|_{DG}$ is defined in \eqref{eqn_dg_norm}.
\end{assumption}

\begin{theorem}(Condition number estimate \cite{toselli2004domain}) \label{thm_condition_numb}
	Let \Cref{assumption1}, \Cref{assumption2}, and \Cref{assumption3} be satisfied. Then, the condition number of the additive Schwarz operator satisfies
	\[
	\kappa( P ) \le C_0^2\omega (\rho({\bf E}) + 1),
	\]
	where ${\bf E}_{ij} = \varepsilon_{ij}$, and the spectral radius of $\bf E$ is given by $\rho({\bf E})$.
\end{theorem}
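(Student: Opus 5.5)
We follow the standard abstract Schwarz argument of \cite{toselli2004domain}, for the symmetric case (SIPG, or ${\bf A}_{\texttt{sym}}$) where $a(\cdot,\cdot)$ is an inner product on $V_h$ equivalent to $\|\cdot\|_{DG}$. Write the additive Schwarz operator as $P=\sum_{i=0}^N P_i$. Here $P_i=\R_i^T\widetilde P_i$ and $\widetilde P_i:V_h\to V_h^i$ is defined by $a_i(\widetilde P_i u,v_i)=a(u,\R_i^T v_i)$ for all $v_i\in V_h^i$. Each $P_i$ is self-adjoint and positive semidefinite in the $a$-inner product, so $P$ is too. Hence $\kappa(P)=\lambda_{\max}(P)/\lambda_{\min}(P)$, and we bound the two extreme eigenvalues separately via Rayleigh quotients $a(Pu,u)/a(u,u)$. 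For the present local spaces ($i\ge1$) the local forms are exact, $a_i(u_i,u_i)=a(\R_i^Tu_i,\R_i^Tu_i)$, and for $i=0$ we have $a_0$ as in \eqref{eq_coarse_grid_bilinear}. Thus \Cref{assumption2} holds trivially with $\omega=1$, but we keep $\omega$ general.

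\textbf{Upper bound.} First, \Cref{assumption2} gives $a(P_iu,P_iu)\le \omega\, a_i(\widetilde P_iu,\widetilde P_iu)=\omega\, a(u,P_iu)$, so $P_i\le\omega$ in the $a$-norm. For the subdomain part $\sum_{i\ge1}P_i$ we expand
\[
a\Big(\sum_{i\ge1}P_iu,\sum_{j\ge1}P_ju\Big)\le\sum_{i,j}\varepsilon_{ij}\,\|P_iu\|_a\|P_ju\|_a\le\rho({\bf E})\sum_i\|P_iu\|_a^2,
\]
using \Cref{assumption1} and the fact that ${\bf E}$ is symmetric and nonnegative. Combining this with $\|P_iu\|_a^2\le\omega\, a(P_iu,u)$ and Cauchy--Schwarz gives $\|\sum_{i\ge1}P_iu\|_a\le\omega\rho({\bf E})\|u\|_a$. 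The coarse operator is treated separately, with $\|P_0\|\le\omega$, so $\lambda_{\max}(P)\le\omega(\rho({\bf E})+1)$. (In the present nonoverlapping setting the interior spaces $V_i$ are $a$-orthogonal only up to interface coupling terms, so $\rho({\bf E})$ is bounded by the maximal number of neighbors.)

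\textbf{Lower bound (Lions' lemma).} Take the decomposition $u=\sum_i\R_i^Tu_i$ from \Cref{assumption3}. Then
\[
a(u,u)=\sum_i a(u,\R_i^Tu_i)=\sum_i a_i(\widetilde P_iu,u_i)\le\Big(\sum_i a_i(\widetilde P_iu,\widetilde P_iu)\Big)^{1/2}\Big(\sum_i a_i(u_i,u_i)\Big)^{1/2}.
\]
The first factor equals $a(Pu,u)^{1/2}$. For the second factor we need $\sum_i a_i(u_i,u_i)\lesssim C_0^2 a(u,u)$. Since the local forms coincide with $a$ on the extended functions, this reduces to \Cref{assumption3} once $a(v,v)$ and $\|v\|_{DG}^2$ are identified. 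Squaring then yields $a(u,u)\le C_0^2\, a(Pu,u)$, that is, $\lambda_{\min}\ge C_0^{-2}$.

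\textbf{Main obstacle.} The delicate step is that \Cref{assumption3} is stated in the $\|\cdot\|_{DG}$ norm rather than in the $a$-norm. We therefore need coercivity and continuity of $a$ with respect to $\|\cdot\|_{DG}$, with constants independent of the coefficients: $a(v,v)\simeq\|v\|_{DG}^2$ for the symmetric form. This follows from the weighted trace inverse inequality with the harmonic-mean penalty \eqref{eq:dg_penalty} for $\sigma$ large enough. Any such equivalence constants get absorbed into $C_0$ (or the statement is read with $\|\cdot\|_{DG}$ replaced by the energy norm), and this gives $\kappa(P)\le C_0^2\omega(\rho({\bf E})+1)$.
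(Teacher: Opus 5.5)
Your proposal is correct and follows the standard abstract Schwarz argument of Toselli--Widlund, which the paper invokes by citation without reproducing it. You use Lions' lemma for $\lambda_{\min}(P)\ge C_0^{-2}$, and $\|P_i\|_a\le\omega$ together with the strengthened Cauchy--Schwarz inequality for $\lambda_{\max}(P)\le\omega(\rho({\bf E})+1)$. Your reading of the stable decomposition in the energy form $\sum_i a_i(u_i,u_i)\le C_0^2a(u,u)$ is also how the paper itself uses it in its stable decomposition theorem.

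One aside: your parenthetical on $\rho({\bf E})$ is wrong. Functions in $V_i$ have zero traces on interface facets, which is the only place different subdomains couple in the IPDG form. So the $V_i$ are exactly $a$-orthogonal and $\rho({\bf E})=1$, as the paper states. This does not affect your proof.
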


 
 


To establish the stable decomposition property (Assumption \ref{assumption3}), we first prove two lemmas which relate the spectrum of the coarse space to the Schur complement. 
\begin{lemma} \label{lemma_stab1}
Define ${\bf \Pi}^{(i)}  : V_h(\Gamma_i) \to \textrm{span}({\bf Q}^{(i)}) $ so that
\[
{\bf \Pi}^{(i)} = {\bf Q}^{(i)} ( ({\bf Q}^{(i)})^T {\bf B}_{\Gamma\Gamma}^{(i)} {\bf Q}^{(i)})^{-1} ({\bf Q}^{(i)})^T.
\]
For all $ u_{\Gamma_i}, v_{\Gamma_i} \in V_h(\Gamma_i)$ for $i=1,2,\ldots, N$ set
\[
a_0^{(i)}(u_{\Gamma_i}, v_{\Gamma_i} )=
 v_{\Gamma_i}^T
\bigg(
{\bf B}_{\Gamma\Gamma}^{(i)}
-
{\bf B}_{\Gamma\Gamma}^{(i)}
{\bf D}^{(i)}
 ( ({\bf Q}^{(i)})^T {\bf B}_{\Gamma\Gamma}^{(i)} {\bf Q}^{(i)})^{-1}
 ({\bf Q}^{(i)})^T
 {\bf B}_{\Gamma\Gamma}^{(i)}
 \bigg)
 u_{\Gamma_i}.
\]
Then, for each $i=1,2,\ldots,N$, we have
\[
a_0^{(i)}(u_{\Gamma_i}, v_{\Gamma_i} ) =
({\bf \Pi}^{(i)}v_{\Gamma_i})^T {\bf S}^{(i)}({\bf \Pi}^{(i)}u_{\Gamma_i})
+
(v_{\Gamma_i} - {\bf \Pi}^{(i)}v_{\Gamma_i})^T
 {\bf B}_{\Gamma\Gamma}^{(i)}
 (u_{\Gamma_i} - {\bf \Pi}^{(i)}u_{\Gamma_i}).
\]
\end{lemma}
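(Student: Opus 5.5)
The plan is a direct algebraic verification that rests on two facts about the retained eigenpairs. First, read in matrix form, the generalized eigenvalue problem of \Cref{sec:coarse_space} states ${\bf S}^{(i)}{\bf Q}^{(i)} = {\bf B}_{\Gamma\Gamma}^{(i)}{\bf Q}^{(i)}{\bf \Lambda}^{(i)}$, where ${\bf \Lambda}^{(i)}=\textrm{diag}(\lambda_1^{(i)},\ldots,\lambda_{k_i}^{(i)})$ and ${\bf D}^{(i)} = I-{\bf \Lambda}^{(i)}$. Second, set ${\bf G}^{(i)}:=({\bf Q}^{(i)})^T{\bf B}_{\Gamma\Gamma}^{(i)}{\bf Q}^{(i)}$, which is symmetric positive definite. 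Then
\[
({\bf Q}^{(i)})^T{\bf S}^{(i)}{\bf Q}^{(i)}={\bf G}^{(i)}{\bf \Lambda}^{(i)} .
\]
The left side is symmetric, so ${\bf G}^{(i)}{\bf \Lambda}^{(i)}={\bf \Lambda}^{(i)}{\bf G}^{(i)}$. In the usual normalization the eigenvectors are ${\bf B}^{(i)}_{\Gamma\Gamma}$-orthogonal and ${\bf G}^{(i)}$ is diagonal; the proof only needs this commutation. Consequently ${\bf G}^{(i)}$ also commutes with ${\bf D}^{(i)}$.

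Before computing, I fix the reading of the statement so that the matrices are dimensionally consistent. The middle factor of $a_0^{(i)}$ should read ${\bf B}^{(i)}_{\Gamma\Gamma}{\bf Q}^{(i)}{\bf D}^{(i)}({\bf G}^{(i)})^{-1}({\bf Q}^{(i)})^T{\bf B}^{(i)}_{\Gamma\Gamma}$, consistent with the assembled ${\bf A}_0$ of \Cref{sec:matrix_coarse}. The projection appearing in the identity should be the ${\bf B}_{\Gamma\Gamma}^{(i)}$-orthogonal projection onto $\textrm{span}({\bf Q}^{(i)})$, namely
\[
\widetilde{\bf \Pi}^{(i)} := {\bf \Pi}^{(i)}{\bf B}^{(i)}_{\Gamma\Gamma} = {\bf Q}^{(i)}({\bf G}^{(i)})^{-1}({\bf Q}^{(i)})^T{\bf B}^{(i)}_{\Gamma\Gamma}.
\]
This is how ${\bf \Pi}^{(i)}$ is meant to act on $V_h(\Gamma_i)$. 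Direct multiplication gives $(\widetilde{\bf \Pi}^{(i)})^2=\widetilde{\bf \Pi}^{(i)}$ and $(\widetilde{\bf \Pi}^{(i)})^T{\bf B}^{(i)}_{\Gamma\Gamma}={\bf B}^{(i)}_{\Gamma\Gamma}\widetilde{\bf \Pi}^{(i)}$.

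I will expand the right-hand side term by term, dropping the superscript $(i)$. The second term uses the projection identities, so the two cross terms and the $\widetilde{\bf \Pi}^T{\bf B}\widetilde{\bf \Pi}$ term collapse:
\[
(v-\widetilde{\bf \Pi} v)^T{\bf B}(u-\widetilde{\bf \Pi} u) = v^T{\bf B}u - v^T{\bf B}{\bf Q}{\bf G}^{-1}{\bf Q}^T{\bf B}u .
\]
The first term becomes, after substituting $\widetilde{\bf \Pi} u={\bf Q}{\bf G}^{-1}{\bf Q}^T{\bf B}u$ and using ${\bf Q}^T{\bf S}{\bf Q}={\bf G}{\bf \Lambda}$ and the commutation,
\[
(\widetilde{\bf \Pi} v)^T{\bf S}(\widetilde{\bf \Pi} u) = v^T{\bf B}{\bf Q}\,{\bf G}^{-1}{\bf G}{\bf \Lambda}{\bf G}^{-1}\,{\bf Q}^T{\bf B}u = v^T{\bf B}{\bf Q}{\bf \Lambda}{\bf G}^{-1}{\bf Q}^T{\bf B}u .
\]
Adding the two terms gives
\[
v^T{\bf B}u - v^T{\bf B}{\bf Q}(I-{\bf \Lambda}){\bf G}^{-1}{\bf Q}^T{\bf B}u = v^T\big({\bf B}-{\bf B}{\bf Q}{\bf D}{\bf G}^{-1}{\bf Q}^T{\bf B}\big)u ,
\]
which is exactly $a_0^{(i)}(u_{\Gamma_i},v_{\Gamma_i})$.

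The only nontrivial point is the commutation of ${\bf G}^{(i)}$ with ${\bf \Lambda}^{(i)}$, which turns ${\bf G}^{-1}{\bf Q}^T{\bf S}{\bf Q}{\bf G}^{-1}$ into ${\bf \Lambda}{\bf G}^{-1}$. I expect this to be the main subtlety; it is handled either by the symmetry argument above or by normalizing the eigenvectors so that ${\bf G}^{(i)}$ is diagonal (or the identity). Repeated eigenvalues cause no trouble, because the symmetry argument does not depend on the choice of basis within an eigenspace.
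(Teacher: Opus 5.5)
Your proof is correct and is essentially the paper's argument run in the opposite direction. The paper splits $u_{\Gamma_i},v_{\Gamma_i}$ with the projection onto $\textrm{span}({\bf Q}^{(i)})$, kills the cross terms via $({\bf Q}^{(i)})^T{\bf B}_{\Gamma\Gamma}^{(i)}({\bf I}-{\bf \Pi}^{(i)})={\bf 0}$, and matches the eigenspace block with ${\bf S}^{(i)}$ through the eigenrelation; you expand the right-hand side and collapse it using the same two facts. Your corrected readings (the missing ${\bf Q}^{(i)}$ in the middle factor, and the projection ${\bf \Pi}^{(i)}{\bf B}_{\Gamma\Gamma}^{(i)}$) are precisely what the paper's proof tacitly uses.

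Your matrix bookkeeping also avoids the paper's loose claim that $\xi^T{\bf S}^{(i)}\xi=\lambda\,\xi^T{\bf B}_{\Gamma\Gamma}^{(i)}\xi$ holds with a single $\lambda$ on all of $\textrm{span}({\bf Q}^{(i)})$. Finally, the commutation you flag as the main subtlety is not actually needed: $({\bf G}^{(i)})^{-1}{\bf G}^{(i)}{\bf \Lambda}^{(i)}({\bf G}^{(i)})^{-1}={\bf \Lambda}^{(i)}({\bf G}^{(i)})^{-1}$ holds trivially, and $({\bf I}-{\bf \Lambda}^{(i)})({\bf G}^{(i)})^{-1}$ is already the corrected middle factor ${\bf D}^{(i)}({\bf G}^{(i)})^{-1}$.
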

\begin{proof}
	We decompose $u_{\Gamma_i},v_{\Gamma_i}\in V_h(\Gamma_i)$ in terms of components that are members of the eigenspace $\text{span}({\bf Q}^{(i)})$ and its orthogonal complement $\text{span}({\bf Q}^{(i)})^\perp$. Let $u_{\Gamma_i} = u_1 + u_2$, $v_{\Gamma_i} = v_1 + v_2$ where $u_1={\bf \Pi}^{(i)} u_{\Gamma_i},v_1={\bf \Pi}^{(i)} v_{\Gamma_i}\in \text{span}({\bf Q}^{(i)})$ and 
	$u_2 = ({\bf I} - {\bf \Pi}^{(i)})u_{\Gamma_i} ,
	v_2
	= ({\bf I} - {\bf \Pi}^{(i)})v_{\Gamma_i} \in\text{span}({\bf Q}^{(i)})^\perp.
	$
	Upon inspecting $a^{(i)}(u_{\Gamma_i}, v_{\Gamma_i} )$, we find
	\begin{align*}
	a^{(i)}(u_{\Gamma_i}, v_{\Gamma_i} ) &= 
	v_1^T\bigg(
{\bf B}_{\Gamma\Gamma}^{(i)}
-
{\bf B}_{\Gamma\Gamma}^{(i)}
{\bf D}^{(i)}
 ( ({\bf Q}^{(i)})^T {\bf B}_{\Gamma\Gamma}^{(i)} {\bf Q}^{(i)})^{-1}
 ({\bf Q}^{(i)})^T
 {\bf B}_{\Gamma\Gamma}^{(i)}
 \bigg)u_1 
 + 
 v_2^T
 {\bf B}_{\Gamma\Gamma}^{(i)}u_2 ,
	\end{align*}
	since the pairs $u_1,v_2$ and $u_2,v_1$ are orthogonal in the ${\bf A}_{\Gamma\Gamma}^{(i)}$ inner product, and from the projection ${\bf \Pi}^{(i)}$ we deduce $({\bf Q}^{(i)})^T{\bf B}_{\Gamma\Gamma}^{(i)}({\bf I} - {\bf \Pi}^{(i)}) ={\bf 0}.$
	
	The generalized eigenvalue problem then asserts that for any $\xi \in  \textrm{span}({\bf Q}^{(i)})$, one has
	\[
	\xi^T {\bf S}^{(i)} \xi = \lambda \xi^T {\bf B}_{\Gamma\Gamma}^{(i)}\xi
=
\xi^T
\bigg(
{\bf B}_{\Gamma\Gamma}^{(i)}
-
{\bf B}_{\Gamma\Gamma}^{(i)}
{\bf D}^{(i)}
 ( ({\bf Q}^{(i)})^T {\bf B}_{\Gamma\Gamma}^{(i)} {\bf Q}^{(i)})^{-1}
 ({\bf Q}^{(i)})^T
 {\bf B}_{\Gamma\Gamma}^{(i)}
 \bigg)
 \xi	
	.
	\]
	From the above relation, and observing that $u_1,v_1 \in  \textrm{span}({\bf Q}^{(i)})$  we conclude
	\[
	v_1^T{\bf S}^{(i)}u_1 
=
v_1^T
\bigg(
{\bf B}_{\Gamma\Gamma}^{(i)}
-
{\bf B}_{\Gamma\Gamma}^{(i)}
{\bf D}^{(i)}
 ( ({\bf Q}^{(i)})^T {\bf B}_{\Gamma\Gamma}^{(i)} {\bf Q}^{(i)})^{-1}
 ({\bf Q}^{(i)})^T
 {\bf B}_{\Gamma\Gamma}^{(i)}
 \bigg)
u_1	 ,
	\]	
	and the result follows.
\end{proof}

\begin{lemma}  \label{lemma_stab2}
	Set $\eta$ to be the cutoff eigenvalue threshold. Then, for all $u_\Gamma\in V_0$, we have
	\[
	a_0(u_\Gamma,u_\Gamma)
	=
	\sum_{i=1}^N a_0^{(i)}( {\bf R}_{\Gamma_i\Gamma}^T u_\Gamma, {\bf R}_{\Gamma_i\Gamma} u_\Gamma)
	\le 
	\eta^{-1}
	\sum_{i=1}^N  u_\Gamma ^T{\bf R}_{\Gamma_i\Gamma}^T {\bf S}^{(i)} {\bf R}_{\Gamma_i\Gamma} u_\Gamma 
	=
	\eta^{-1}u_\Gamma^T {\bf S} u_\Gamma.
	\]
\end{lemma}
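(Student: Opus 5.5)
The plan is to prove the inequality subdomain by subdomain. Both outer equalities are definitions: the first is the assembly formula for ${\bf A}_0$ in \Cref{sec:matrix_coarse}, and the last is ${\bf S}=\sum_i \R_{\Gamma_i\Gamma}^T{\bf S}^{(i)}\R_{\Gamma_i\Gamma}$. So it suffices to show, for each $i$ and every $w=\R_{\Gamma_i\Gamma}u_\Gamma\in V_h(\Gamma_i)$,
\[
a_0^{(i)}(w,w)\le \eta^{-1}\, w^T{\bf S}^{(i)}w .
\]

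First I apply \Cref{lemma_stab1} and write $w=w_1+w_2$ with $w_1={\bf \Pi}^{(i)}w\in\textrm{span}({\bf Q}^{(i)})$ and $w_2=({\bf I}-{\bf \Pi}^{(i)})w$. This gives
\[
a_0^{(i)}(w,w)=w_1^T{\bf S}^{(i)}w_1+w_2^T{\bf B}_{\Gamma\Gamma}^{(i)}w_2 .
\]
Since ${\bf S}^{(i)}$ and ${\bf B}_{\Gamma\Gamma}^{(i)}$ are symmetric with ${\bf B}_{\Gamma\Gamma}^{(i)}$ positive definite, I take the full set of generalized eigenvectors $\xi_j^{(i)}$ to be ${\bf B}_{\Gamma\Gamma}^{(i)}$-orthonormal. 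They are then also ${\bf S}^{(i)}$-orthogonal. The retained ones span $\textrm{span}({\bf Q}^{(i)})$, and ${\bf \Pi}^{(i)}$ acts as the ${\bf B}$-orthogonal projection onto this span (one applies ${\bf \Pi}^{(i)}{\bf B}_{\Gamma\Gamma}^{(i)}$, consistent with the formula for $\R_0^{(i)^T}$). Hence $w_2$ lies in the span of the discarded eigenvectors, whose eigenvalues satisfy $\lambda_j^{(i)}\ge\eta$. Expanding $w_2=\sum_{j>k_i}c_j\xi_j^{(i)}$ gives
\[
w_2^T{\bf B}_{\Gamma\Gamma}^{(i)}w_2=\sum c_j^2\le \eta^{-1}\sum\lambda_j c_j^2=\eta^{-1}w_2^T{\bf S}^{(i)}w_2 .
\]

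For the first term, I use that $\eta\le 1$. This is automatic for the choice ${\bf B}={\bf A}_{\Gamma\Gamma}$ and is the intended regime $\eta=\mathcal{O}(h/H)$ in general. Then $w_1^T{\bf S}^{(i)}w_1\le \eta^{-1}w_1^T{\bf S}^{(i)}w_1$. Finally, the ${\bf S}^{(i)}$-orthogonality of $w_1$ and $w_2$, which holds because both are expansions in disjoint sets of ${\bf S}$-orthogonal eigenvectors, gives $w_1^T{\bf S}^{(i)}w_1+w_2^T{\bf S}^{(i)}w_2=w^T{\bf S}^{(i)}w$. Summing over $i$ yields the claim.

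The main obstacle is the orthogonality bookkeeping: the cross terms must vanish in both the ${\bf B}$ and ${\bf S}$ inner products. This requires identifying ${\bf \Pi}^{(i)}$ (composed with ${\bf B}_{\Gamma\Gamma}^{(i)}$) with the ${\bf B}$-orthogonal spectral projection. If eigenvalues repeat across the cutoff, I would choose the eigenbasis so that the orthogonality holds.
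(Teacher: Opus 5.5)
Your proposal is correct and follows the same route as the paper: you split $w$ via Lemma~\ref{lemma_stab1}, bound the discarded part using $\lambda_j^{(i)}\ge\eta$, use $\eta\le 1$ on the retained part, and recombine by ${\bf S}^{(i)}$-orthogonality before summing over $i$; your reading of ${\bf \Pi}^{(i)}{\bf B}_{\Gamma\Gamma}^{(i)}$ as the ${\bf B}$-orthogonal projection makes precise what the paper leaves implicit. One small correction: $\eta\le 1$ is a genuine hypothesis (the paper also assumes $0<\eta<1$), not automatic for ${\bf B}_{\Gamma\Gamma}^{(i)}={\bf A}_{\Gamma\Gamma}^{(i)}$, because there all $\lambda_j^{(i)}\le 1$, so a threshold $\eta>1$ would retain every mode, give $a_0^{(i)}(w,w)=w^T{\bf S}^{(i)}w$, and make the factor $\eta^{-1}<1$ false.
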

\begin{proof}
Let $u^{(i)} = {\bf \Pi}^{(i)} {\bf R}_{\Gamma_i\Gamma}  u_\Gamma$ and $v^{(i)} =  ({\bf I} - {\bf \Pi}^{(i)}){\bf R}_{\Gamma_i\Gamma}   u_\Gamma = {\bf R}_{\Gamma_i\Gamma}u_\Gamma -u^{(i)},$ so that $u^{(i)}$ and $v^{(i)}$ are orthogonal in the appropriate inner product. The generalized eigenvalue problem allows us to establish for any $v_{\Gamma_i}\in \textrm{span}({\bf Q}^{(i)})^\perp$
\[
v_{\Gamma_i}^T {\bf S}^{(i)} v_{\Gamma_i}
\ge 
\eta 
v_{\Gamma_i}^T {\bf B}_{\Gamma\Gamma}^{(i)} v_{\Gamma_i}.
\]
Similarly, for any $v_{\Gamma_i}\in \textrm{span}({\bf Q}^{(i)}),$ we have
\[
v_{\Gamma_i}^T {\bf S}^{(i)} v_{\Gamma_i}
<
\eta 
v_{\Gamma_i}^T {\bf B}_{\Gamma\Gamma}^{(i)} v_{\Gamma_i}.
\]
Using Lemma~\ref{lemma_stab1}, and $0<\eta<1$, we obtain
\begin{align*}
a_0^{(i)}({\bf R}_{\Gamma_i\Gamma}  u_\Gamma ,{\bf R}_{\Gamma_i\Gamma}  u_\Gamma )
	&=
	(u^{(i)})^T{\bf S} u^{(i)} + (v^{(i)})^T{\bf B}_{\Gamma\Gamma} v^{(i)} 
	\\
	&\le (u^{(i)})^T{\bf S} u^{(i)} + \eta^{-1}(v^{(i)})^T {\bf S} v^{(i)}
	\\
	&\le  \eta^{-1}(u^{(i)})^T{\bf S} u^{(i)} + \eta^{-1}(v^{(i)})^T {\bf S} v^{(i)}	
\\
	&=
 \eta^{-1}({\bf R}_{\Gamma_i\Gamma}  u_\Gamma )^T {\bf S}^{(i)} {\bf R}_{\Gamma_i\Gamma}  u_\Gamma ,  
\end{align*}
which holds for all subdomains $i=1,2,\ldots,N$. We then immediately find
\begin{align*}
a_0(   u_\Gamma ,  u_\Gamma )
 &= 
 \sum_{i=1}^N
 a_0^{(i)}(  {\bf R}_{\Gamma_i\Gamma}  u_\Gamma ,  {\bf R}_{\Gamma_i\Gamma}  u_\Gamma )
 \\
 &\le 
 \eta^{-1} 
  \sum_{i=1}^N
 ({\bf R}_{\Gamma_i\Gamma}  u_\Gamma )^T {\bf S}^{(i)} {\bf R}_{\Gamma_i\Gamma}  u_\Gamma 
 \\
 &= \eta^{-1}   u_\Gamma^T{\bf S}u_\Gamma.
\end{align*}
The last equality follows from the choice of $A^{(i)}$ such that ${\bf A} = \sum_{i=1}^N \R^{(i)^T} {\bf A}^{(i)} \R^{(i) }$.
\end{proof}

\begin{theorem}[Stable Decomposition]
Suppose $u\in V_h$, where $u = \sum_{i=0}^{N} \R_i^T u_i,$ $u_i\in V_h^i$ for $0\le i \le N$. Then, there exists a constant $C_0$ which is independent of $\K$, the subdomain size $H$ and the mesh $h$ such that
	\begin{equation}
	\sum_{i=0}^N {a}_i(u_i,u_i) \le C_0^2 a (u ,u ),
	\end{equation}
	and $C_0^2 = 2 + 3\eta^{-1}$.
\end{theorem}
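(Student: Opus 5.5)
The decomposition I will use is the natural one for a direct-sum splitting. Given $u\in V_h$ (SIPG, so $a$ is symmetric and $a(v,v)$ is comparable to $\|v\|_{DG}^2$), let $u_\Gamma=u|_\Gamma\in V_0$ and set $u_0:=u_\Gamma$. Then $\R_0^T u_0$ agrees with $u$ on $\Gamma$, so $u-\R_0^Tu_0$ vanishes on $\Gamma$. It therefore splits uniquely as $\sum_{i=1}^N \R_i^T u_i$ with $u_i\in V_i=V_h(I_i)$. Since $V_h(\Omega)=\R_0^TV_0\oplus\R_1^TV_1\oplus\cdots\oplus\R_N^TV_N$, this is the only admissible decomposition, and the estimate has to be proved for it.

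\emph{Coarse term.} By definition $a_0(u_0,u_0)=a(\R_0^Tu_\Gamma,\R_0^Tu_\Gamma)$. Lemma~\ref{lemma_stab2} gives $a_0(u_\Gamma,u_\Gamma)\le \eta^{-1}u_\Gamma^T{\bf S}u_\Gamma=\eta^{-1}a(\mathcal{H}u_\Gamma,\mathcal{H}u_\Gamma)$. Because $\mathcal{H}u_\Gamma$ is the $a$-energy minimizing extension of $u_\Gamma$ subdomain by subdomain, and $u$ is some extension of $u_\Gamma$, we get $a(\mathcal{H}u_\Gamma,\mathcal{H}u_\Gamma)\le a(u,u)$. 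Hence $a_0(u_0,u_0)\le \eta^{-1}a(u,u)$.

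\emph{Local terms.} Write $w:=u-\R_0^Tu_0=\sum_{i\ge1}\R_i^Tu_i$. The supports are disjoint subdomain interiors, and the functions vanish on $\Gamma$, so the only facets where two pieces could interact are interface facets. On those facets the interface values are zero, so the jump of $\R_i^Tu_i$ involves only its own trace, and no facet integral couples $u_i$ with $u_j$ for $i\neq j$. Thus $a(\R_i^Tu_i,\R_j^Tu_j)=0$ for $i\ne j$, and
\[
\sum_{i=1}^N a_i(u_i,u_i)=a(w,w)\le 2a(u,u)+2a(\R_0^Tu_0,\R_0^Tu_0)\le (2+2\eta^{-1})\,a(u,u).
\]
Adding the coarse term gives $\sum_{i=0}^N a_i(u_i,u_i)\le (2+3\eta^{-1})\,a(u,u)$, which is the claimed $C_0^2$. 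The constant is independent of $\K$, $H$ and $h$ because only $\eta$ enters. (With $\eta=\mathcal{O}(h/H)$ the bound does depend on $\eta$, but it is uniform in the coefficients once $\eta$ is fixed.)

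\emph{Main obstacle.} The delicate step is the block-orthogonality $a(\R_i^Tu_i,\R_j^Tu_j)=0$, together with the claim that $a(\cdot,\cdot)$ on $w$ equals the sum of the local $a_i$. This requires checking, from the unassembled forms $a^{(i)}$ with the weights $\omega_i$ and the factor $0.5\sigma_e$, how the flux terms $\{\K\nabla v\cdot{\bf n}\}_\omega[w]$ behave on interface facets. In DG the interface degrees of freedom are duplicated, one trace from each side. A function in $V_i$ is zero on the $\Gamma$-degrees of freedom, but its own flux still appears in the interface integrals paired against the other side's trace. I would first verify that these cross terms multiply only interface traces, which vanish for interior-supported functions. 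If instead a nonzero coupling survives, I would fall back to a strengthened Cauchy--Schwarz bound with a coloring constant, following Assumption~\ref{assumption1}. That would only change the numerical factor, not the structure of the argument.
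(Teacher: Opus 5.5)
Your proposal is correct and follows the paper's proof essentially line by line. You use the same (unique) decomposition, the same coarse bound $a(\R_0^Tu_0,\R_0^Tu_0)\le\eta^{-1}s(u_0,u_0)\le\eta^{-1}a(u,u)$ via Lemma~\ref{lemma_stab2} and minimum energy, and the same estimate $a(u-\R_0^Tu_0,u-\R_0^Tu_0)\le 2a(u,u)+2a(\R_0^Tu_0,\R_0^Tu_0)$, which together give $C_0^2=2+3\eta^{-1}$.

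The block-orthogonality you flag as the main obstacle does hold, so no coloring fallback is needed. On each interface facet both $[\R_i^Tu_i]$ and $[\R_j^Tu_j]$ vanish, so every coupling flux and penalty term is zero. This is exactly the identity $\sum_{i\ge1}a(\R_i^Tu_i,\R_i^Tu_i)=a(u-\R_0^Tu_0,u-\R_0^Tu_0)$ that the paper uses without comment.
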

\begin{proof}
	We can write $\sum_{i=1}^N\R_i^T u_i = u - \R_0^T u_0$; then the Cauchy-Schwarz inequality yields
\begin{align*}
	\sum_{i=0}^N a_i(u_i,u_i) &= a_0(u_0,u_0)+ \sum_{i=1}^N a_i(u_i,u_i)
	\\
	&= a(\R_0^T u_0,\R_0^T u_0) + \sum_{i=1}^N a (\R_i^T u_i,\R_i^T u_i)
    \\
	&= a(\R_0^T u_0,\R_0^T u_0) +  a ( u - \R_0^T u_0, u - \R_0^T u_0)
	\\
	&\le 3  a(\R_0^T u_0,\R_0^T u_0) + 2 a (u,u).                                      
\end{align*}	
We next bound the term $a(\R_0^T u_0,\R_0^T u_0)$. From Lemma~\ref{lemma_stab2} we have
\begin{align*} 
a(\R_0^T u_0,\R_0^T u_0) &\le \eta^{-1} s(u_0,u_0)
\\
 &= \eta^{-1} a( {\bf \mathcal{H}} u_0,{\bf \mathcal{H}} u_0)
 \\
 &\le \eta^{-1} a( u,u),
\end{align*}
because of the minimum energy principle for functions in the discrete harmonic space $V_h(\Gamma)$ \cite{brenner2007mathematical}. These relations enable us to arrive at
\begin{align}
	\sum_{i=0}^N a_i(u_i,u_i)  &\le 3  a(\R_0^T u_0,\R_0^T u_0) + 2 a (u,u)
	\\
	&\le (2 +3\eta^{-1} ) a (u,u),      
\end{align}	
and the result follows.
\end{proof}
  
Assumption \ref{assumption1} is straightforward to verify with $\rho({\bf E}) = 1$ since the spaces $V_i$ are disjoint. 
  
Assumption  \ref{assumption2} follows with $\omega=1$ for  $i=1,\dots,N$ since
we use exact local bilinear forms. Now we consider the case of the coarse space. The case 
${\bf B}_{\Gamma \Gamma} = {\bf A}_{\Gamma \Gamma}$ implies $\omega=1$ since we use exact global bilinear form. In the case ${\bf B}_{\Gamma \Gamma} = {\bf \hat{A}}_{\Gamma \Gamma}$ we use ${\bf{A}}_{\Gamma \Gamma} \leq 3 {\bf \hat{A}}_{\Gamma \Gamma}$ because there are no more than two off-diagonal blocks of ${\bf{A}}_{\Gamma \Gamma}^{(i)}$ on the same block row, hence $\omega=3$. 
In the case ${\bf B}_{\Gamma \Gamma} = {\bf \hat{S}}_{\Gamma \Gamma}$ we use that  $ {\bf S}_{\Gamma \Gamma} \leq \omega  {\bf \hat{S}}_{\Gamma \Gamma}$ where $\omega$ is maximum number of edges plus corners of the $\Gamma_i$

Together with the Assumption  \ref{assumption3},\ Theorem \ref{thm_condition_numb} allows us to conclude that the condition number of the two-level additive Schwarz operator is independent of the number of subdomains as well as the highly varying coefficients.
   
\section{Numerical experiments} \label{sec:num_ex}
The results of \Cref{sec:theory} are verified and validated here. In addition, the preconditioner for nonsymmetric IPDG (\Cref{sec:non_sym}) are examined here. Some of the coarse spaces from \Cref{sec:coarse_space2} are also compared. For all numerical experiments, we fix the user-defined penalty parameter $\sigma = 15$ 
(see \Cref{eq:dg_penalty}).


To increase transparency, reproducibility, and for comparison purposes, the underlying benchmarks used in \Cref{sec:num_ex_1},  \Cref{sec:num_ex_2}, and \Cref{sec:num_ex_3} are adapted from \cite{dryja2014additive}. For all numerical experiments we assume homogeneous Dirichlet boundary conditions and set the forcing function $f\equiv 1$ in \eqref{eq:model}.

\subsection{Jump coefficient aligned with mesh} \label{sec:num_ex_1}  
    We take the unit square and partition it into a $4\times 4$ square grid. A red–white checkerboard coloring of this partitioning is made. The coefficient $\K$ is piecewise constant such that it equals $\rho_1$ in the red regions, and one on the white regions:
    \[
    \K =
    \begin{cases}
    \rho_{1}  
     &\mbox{ if } E\in \mathcal{T}_h,~~ E\in \texttt{red}
    \\
 1 
       &\mbox{ if }E\in \mathcal{T}_h,~~E\in  \texttt{white}
    \end{cases}.
    \]
The $4\times 4$ square is further subdivided into a mesh of right triangles (see \Cref{fig:num_ex_1}).
\begin{figure}[htb!]
    \centering
    \subfloat[\centering $h=1/8$]{{\includegraphics[scale=0.4]{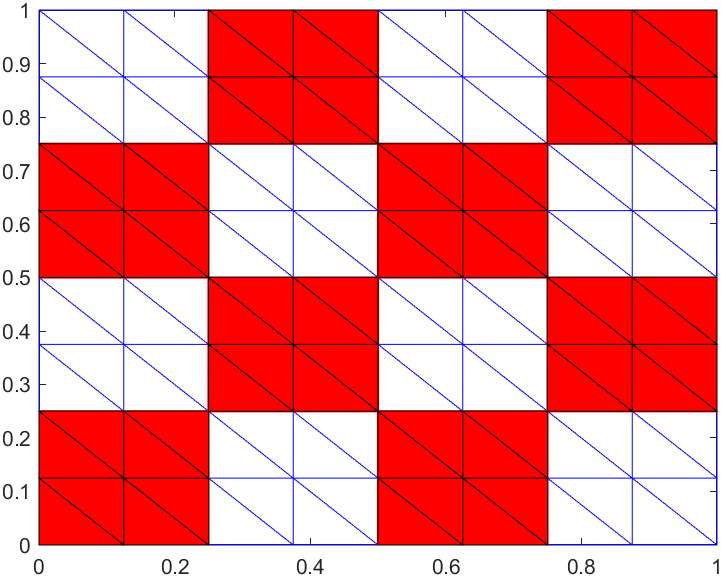} } }%
    \qquad
    \subfloat[\centering $h=1/16$]{{\includegraphics[scale=0.4]{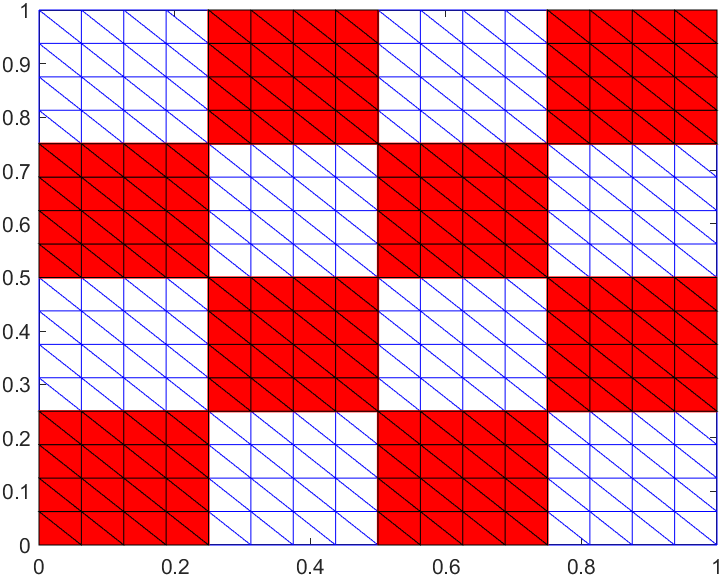} }
    }%
    \caption{Checkerboard coloring for the $4\times 4$ partitioning.
    }%
    \label{fig:num_ex_1}%
\end{figure} 
We vary $\rho_1 = 10^j$ for $j=0,1,2,\ldots,6$. For the eigenvalue threshold (see \Cref{sec:coarse_space}).  The computational results are provided in \Cref{tab:tab_ex_1}. We keep track of the number of preconditioned Krylov steps required to reduce the relative residual smaller than $10^{-6}$. The terms in parenthesis are the estimated condition numbers of the preconditioned problem. We can see that as the jump in coefficients grows, the number of iterations remains small and the condition number is bounded. Moreover, as the ratio $\frac{H}{h}$ increases, the iterations and condition numbers remain similarly uniform. It is evident that the additive Schwarz preconditioner is very robust in the situation where the jump in $\K$ aligns with the mesh.

\begin{table}[htbp]
\centering
\scriptsize
\setlength{\tabcolsep}{4pt}
\renewcommand{\arraystretch}{1.15}
\begin{tabular}{c c c c c c c c c c}
\toprule
\(H/h\) & \(N_E\) & \% ret.
& \(10^0\) & \(10^1\) & \(10^2\) & \(10^3\)
& \(10^4\) & \(10^5\) & \(10^6\) \\
\midrule
4
& 188 & 25.07
& 12 (9.17) & 10 (4.74) & 10 (4.33) & 10 (4.38)
& 10 (4.38) & 10 (4.38) & 10 (4.38) \\

8
& 372 & 24.51
& 8 (2.92) & 7 (2.17) & 7 (2.12) & 7 (2.15)
& 7 (2.15) & 7 (2.15) & 7 (2.15) \\

16
& 736 & 24.10
& 8 (3.87) & 10 (4.61) & 10 (5.40) & 10 (5.62)
& 10 (4.96) & 9 (5.14) & 9 (5.29) \\

32
& 1476 & 24.09
& 9 (3.32) & 12 (4.65) & 10 (6.60) & 10 (5.45)
& 9 (4.92) & 9 (4.99) & 8 (4.75) \\
\bottomrule
\end{tabular}
\caption{Results of the numerical experiments from \Cref{sec:num_ex_1} (isotropic jump coefficients aligned with mesh). The local selection rule retains approximately
the lowest \(24\%\) of the generalized eigenmodes on each subdomain. The columns
\(10^j\) correspond to \(\rho_1=10^j\). Entries report Krylov iterations and,
in parentheses, the estimated condition number \({\kappa}\). The quantities
\(N_E\) and \(\%\) retained denote the total number and percentage of local
generalized eigenvectors retained over all subdomains.} 
\label{tab:tab_ex_1}
\end{table}

\subsection{Jump coefficient not aligned with mesh} \label{sec:num_ex_2} 
We repeat the experiments in \Cref{sec:num_ex_1}, but use an initial $3\times 3$ partition of the unit square. This partitioning is only used to define the checkerboard pattern for the coefficient $\K$. The triangular mesh is formed by first subdividing the unit square into a grid of smaller $(2^{n_H})^2$ squares. Each of these squares are further divided into equal area right triangles. This way, the resulting triangular mesh is not aligned with the jumps in the coefficients $\K$.

We take $H=1/4$, and pick $h$ to fix a ratio $\frac{H}{h}\in\{4,8,16\}$. For this cost-normalized comparison, we use a fixed-fraction spectral selection: on each subdomain, approximately the lowest \(24\%\) of the local generalized eigenvalues are retained in the coarse space.




\begin{table}[htbp]
\centering
\scriptsize
\setlength{\tabcolsep}{4pt}
\renewcommand{\arraystretch}{1.15}
\begin{tabular}{c c c c c c c c c c}
\toprule
\(H/h\) & \(N_E\) & \% ret.
& \(10^0\) & \(10^1\) & \(10^2\) & \(10^3\)
& \(10^4\) & \(10^5\) & \(10^6\) \\
\midrule
4
& 129 & 34.31
& 9 (4.02) & 12 (9.16) & 13 (11.54) & 13 (11.59)
& 13 (11.51) & 13 (11.49) & 13 (11.49) \\

8
& 187 & 24.61
& 8 (2.75) & 10 (4.69) & 11 (6.42) & 11 (6.48)
& 11 (6.49) & 11 (6.49) & 11 (6.49) \\

16
& 369 & 24.15
& 8 (3.75) & 9 (4.03) & 10 (5.33) & 9 (4.80)
& 9 (5.04) & 9 (4.59) & 8 (3.06) \\

32
& 739 & 24.12
& 9 (3.96) & 11 (4.57) & 11 (5.71) & 11 (6.02)
& 8 (3.51) & 8 (3.50) & 8 (3.50) \\
\bottomrule
\end{tabular}
\caption{Results of the numerical experiments from \Cref{sec:num_ex_2} (jump coefficients unaligned with mesh).
The \(H/h=4\) row uses a larger local fixed-fraction selection, retaining about
\(34.3\%\) of the generalized eigenmodes, while the remaining rows retain about
\(24\%\). The columns \(10^j\) correspond to \(\rho_1=10^j\). Entries report
Krylov iterations and, in parentheses, the estimated condition number
\({\kappa}\). The quantities \(N_E\) and \(\%\) retained denote the total
number and percentage of local generalized eigenvectors retained over all
subdomains.}
\label{tab:tab_ex_2}
\end{table}

\Cref{tab:tab_ex_2} has results of the experiment. 
We can see that as the jump in coefficients grows, the number of iterations remains small, and the condition number remains fixed. Moreover, as the ratio $\frac{H}{h}$ increases, the iterations and condition numbers remain nearly uniformly controlled. Similarly to the results of \Cref{sec:num_ex_1}, we see that the preconditioner does not deteriorate if the jump coefficients are unaligned with the mesh or domain decomposition.

\subsection{Comparisons for different choices of \(B_{\Gamma\Gamma}^{(i)}\)}

We repeat the unaligned checkerboard coefficient experiment from
\Cref{sec:num_ex_2}. Both the local generalized eigenvalue problem and the
corresponding coarse-grid matrix depend on the choice of
\(B_{\Gamma\Gamma}^{(i)}\). We consider two additional choices, namely
\(B_{\Gamma\Gamma}^{(i)}=\operatorname{blkdiag}(S_{\Gamma\Gamma}^{(i)})\)
and
\(B_{\Gamma\Gamma}^{(i)}=\operatorname{blkdiag}(A_{\Gamma\Gamma}^{(i)})\).

\subsubsection{Case of
\(B_{\Gamma\Gamma}^{(i)}
=\operatorname{blkdiag}(S_{\Gamma\Gamma}^{(i)})\)}

The choice
\(B_{\Gamma\Gamma}^{(i)}
=\operatorname{blkdiag}(S_{\Gamma\Gamma}^{(i)})\)
results in a very robust preconditioner. However, the local problems generally
require the formation and storage of the local Schur matrices
\(S_{\Gamma\Gamma}^{(i)}\), which can be expensive.

\Cref{tab:nosas_pencil_compare_blkS} displays the results of the NOSAS
preconditioner applied to the test problem from \Cref{sec:num_ex_2} with
\(B_{\Gamma\Gamma}^{(i)}
=\operatorname{blkdiag}(S_{\Gamma\Gamma}^{(i)})\).
Significantly fewer eigenvectors are required to obtain performance comparable
to that of the choice
\(B_{\Gamma\Gamma}^{(i)}=A_{\Gamma\Gamma}^{(i)}\); see
\Cref{tab:tab_ex_2}.

\begin{table}[htbp!]
\centering
\scriptsize
\setlength{\tabcolsep}{4pt}
\renewcommand{\arraystretch}{1.15}
\begin{tabular}{c c c c c c c c c c}
\toprule
\(H/h\) & \(N_E\) & \% ret.
& \(10^0\) & \(10^1\) & \(10^2\) & \(10^3\)
& \(10^4\) & \(10^5\) & \(10^6\) \\
\midrule
4
& 9 & 2.39
& 10 (6.10) & 11 (6.77) & 11 (7.83) & 10 (8.22)
& 10 (8.58) & 10 (8.62) & 10 (8.62) \\

8
& 10 & 1.32
& 10 (8.63) & 11 (9.45) & 11 (12.33) & 11 (11.86)
& 11 (11.93) & 11 (11.94) & 11 (11.94) \\

16
& 19 & 1.24
& 10 (5.37) & 10 (4.66) & 9 (4.53) & 9 (4.75)
& 9 (4.76) & 9 (4.76) & 9 (4.76) \\

32
& 34 & 1.11
& 9 (5.19) & 9 (5.02) & 9 (5.00) & 9 (5.16)
& 9 (5.18) & 9 (5.18) & 9 (5.18) \\
\bottomrule
\end{tabular}
\caption{NOSAS results for the unaligned checkerboard coefficient test using
\(B_{\Gamma\Gamma}^{(i)}
=\operatorname{blkdiag}(S_{\Gamma\Gamma}^{(i)})\).}
\label{tab:nosas_pencil_compare_blkS}
\end{table}

As explained in \Cref{sec:coarse_space}, the choice
\(B_{\Gamma\Gamma}^{(i)}=A_{\Gamma\Gamma}^{(i)}\) ensures that the corresponding
local generalized eigenvalues lie in the interval \([0,1]\). This is no longer
true in general for other choices of \(B_{\Gamma\Gamma}^{(i)}\). We therefore
also report the maximum and minimum eigenvalues for
\(B_{\Gamma\Gamma}^{(i)}
=\operatorname{blkdiag}(S_{\Gamma\Gamma}^{(i)})\)
before any eigenvectors are removed.

These values are displayed in
\Cref{tab:full-local-eig-extremes-blkS}. The maximum eigenvalue is greater than
one and approaches two as the coefficient contrast increases. Each floating
subdomain has one zero eigenvalue, up to numerical roundoff.

\begin{table}[htbp]
\centering
\scriptsize
\caption{Maximum and minimum local generalized eigenvalues for
\(B_{\Gamma\Gamma}^{(i)}
=\operatorname{blkdiag}(S_{\Gamma\Gamma}^{(i)})\). Each entry is
\(\lambda_{\max}\,(\lambda_{\min})\).}
\label{tab:full-local-eig-extremes-blkS}

\setlength{\tabcolsep}{4pt}

\scalebox{0.95}{%
\begin{tabular}{c|cccc}
\hline
\(H/h\) & \(10^0\) & \(10^1\) & \(10^2\) & \(10^3\) \\
\hline
\(4\)
& \(1.768\,(-1.55{\times}10^{-15})\)
& \(1.897\,(-8.79{\times}10^{-15})\)
& \(1.985\,(-5.16{\times}10^{-14})\)
& \(1.998\,(8.49{\times}10^{-15})\)
\\
\(8\)
& \(1.858\,(-1.28{\times}10^{-13})\)
& \(1.933\,(1.98{\times}10^{-14})\)
& \(1.990\,(-2.31{\times}10^{-14})\)
& \(1.999\,(2.24{\times}10^{-14})\)
\\
\(16\)
& \(1.908\,(-2.71{\times}10^{-14})\)
& \(1.957\,(-6.35{\times}10^{-14})\)
& \(1.993\,(-2.15{\times}10^{-13})\)
& \(1.999\,(-2.38{\times}10^{-13})\)
\\
\(32\)
& \(1.936\,(9.16{\times}10^{-13})\)
& \(1.972\,(9.91{\times}10^{-13})\)
& \(1.995\,(-2.37{\times}10^{-12})\)
& \(2.000\,(-8.75{\times}10^{-13})\)
\\
\hline\hline
\(H/h\) & \(10^4\) & \(10^5\) & \(10^6\) & \\
\hline
\(4\)
& \(2.000\,(8.39{\times}10^{-16})\)
& \(2.000\,(-1.83{\times}10^{-14})\)
& \(2.000\,(6.19{\times}10^{-15})\)
&
\\
\(8\)
& \(2.000\,(-4.99{\times}10^{-14})\)
& \(2.000\,(-3.59{\times}10^{-14})\)
& \(2.000\,(-1.75{\times}10^{-14})\)
&
\\
\(16\)
& \(2.000\,(-2.97{\times}10^{-14})\)
& \(2.000\,(-8.31{\times}10^{-14})\)
& \(2.000\,(-2.08{\times}10^{-12})\)
&
\\
\(32\)
& \(2.000\,(-6.52{\times}10^{-12})\)
& \(2.000\,(-8.92{\times}10^{-13})\)
& \(2.000\,(4.01{\times}10^{-13})\)
&
\\
\hline
\end{tabular}%
}
\end{table}

\subsubsection{Case of
\(B_{\Gamma\Gamma}^{(i)}
=\operatorname{blkdiag}(A_{\Gamma\Gamma}^{(i)})\)}

We next consider
\(B_{\Gamma\Gamma}^{(i)}
=\operatorname{blkdiag}(A_{\Gamma\Gamma}^{(i)})\),
which is less expensive than
\(B_{\Gamma\Gamma}^{(i)}
=\operatorname{blkdiag}(S_{\Gamma\Gamma}^{(i)})\)
and \(B_{\Gamma\Gamma}^{(i)}=A_{\Gamma\Gamma}^{(i)}\).
The local Schur matrix \(S_{\Gamma\Gamma}^{(i)}\) is not required for this
choice, and
\(\operatorname{blkdiag}(A_{\Gamma\Gamma}^{(i)})\)
requires less storage than \(A_{\Gamma\Gamma}^{(i)}\).

As in the preceding case, the generalized eigenvalues are not necessarily
contained in the interval \([0,1]\). The maximum and minimum eigenvalues before
spectral selection are reported in
\Cref{tab:full-local-eig-extremes-blkAgg}. The maximum eigenvalue is
approximately \(1.504\) for all mesh sizes and coefficient contrasts. The
minimum eigenvalues correspond to the zero eigenvalue on each floating
subdomain, up to numerical roundoff.

\begin{table}[htbp]
\centering
\scriptsize
\caption{Maximum and minimum local generalized eigenvalues for
\(B_{\Gamma\Gamma}^{(i)}
=\operatorname{blkdiag}(A_{\Gamma\Gamma}^{(i)})\). Each entry is
\(\lambda_{\max}\,(\lambda_{\min})\).}
\label{tab:full-local-eig-extremes-blkAgg}

\setlength{\tabcolsep}{4pt} 

\scalebox{0.95}{%
\begin{tabular}{c|cccc}
\hline
\(H/h\)
& \(10^0\)
& \(10^1\)
& \(10^2\)
& \(10^3\)
\\
\hline
\(4\)
& \(1.5040\,(-7.97{\times}10^{-17})\)
& \(1.5041\,(-4.49{\times}10^{-16})\)
& \(1.5040\,(-4.79{\times}10^{-16})\)
& \(1.5040\,(-2.80{\times}10^{-16})\)
\\
\(8\)
& \(1.5040\,(-1.05{\times}10^{-15})\)
& \(1.5041\,(-3.24{\times}10^{-17})\)
& \(1.5040\,(-1.51{\times}10^{-15})\)
& \(1.5040\,(-1.04{\times}10^{-15})\)
\\
\(16\)
& \(1.5040\,(-2.95{\times}10^{-16})\)
& \(1.5041\,(8.58{\times}10^{-17})\)
& \(1.5040\,(-3.59{\times}10^{-16})\)
& \(1.5040\,(-2.36{\times}10^{-15})\)
\\
\(32\)
& \(1.5040\,(3.58{\times}10^{-15})\)
& \(1.5041\,(-1.11{\times}10^{-15})\)
& \(1.5040\,(2.68{\times}10^{-16})\)
& \(1.5040\,(-2.93{\times}10^{-15})\)
\\
\hline\hline
\(H/h\)
& \(10^4\)
& \(10^5\)
& \(10^6\)
&
\\
\hline
\(4\)
& \(1.5040\,(-1.96{\times}10^{-17})\)
& \(1.5040\,(-3.74{\times}10^{-16})\)
& \(1.5040\,(5.37{\times}10^{-17})\)
&
\\
\(8\)
& \(1.5040\,(-1.28{\times}10^{-16})\)
& \(1.5040\,(-7.81{\times}10^{-16})\)
& \(1.5040\,(-1.79{\times}10^{-16})\)
&
\\
\(16\)
& \(1.5040\,(-2.62{\times}10^{-15})\)
& \(1.5040\,(-1.49{\times}10^{-15})\)
& \(1.5040\,(2.78{\times}10^{-16})\)
&
\\
\(32\)
& \(1.5040\,(-8.71{\times}10^{-15})\)
& \(1.5040\,(-7.59{\times}10^{-16})\)
& \(1.5040\,(-4.17{\times}10^{-15})\)
&
\\
\hline
\end{tabular}%
}
\end{table}

For
\(B_{\Gamma\Gamma}^{(i)}
=\operatorname{blkdiag}(A_{\Gamma\Gamma}^{(i)})\),
the iteration counts and estimated condition numbers are given in
\Cref{tab:nosas-pencil-compare-blkAgg-fraction23}. The results are generally
comparable to those obtained with
\(B_{\Gamma\Gamma}^{(i)}=A_{\Gamma\Gamma}^{(i)}\); see
\Cref{tab:tab_ex_2}. Some degradation in the condition number and iteration
count is observed for certain parameter choices. However,
\(\operatorname{blkdiag}(A_{\Gamma\Gamma}^{(i)})\)
is less expensive to store and apply than the full matrix
\(A_{\Gamma\Gamma}^{(i)}\).

This reduction in local cost must be balanced against the size of the
resulting coarse space. In this experiment, approximately \(24\%\) to \(25\%\)
of the local eigenvectors are retained, which may increase the cost of the
coarse-grid solve.

\begin{table}[htbp!]
\centering
\scriptsize
\setlength{\tabcolsep}{4pt}
\renewcommand{\arraystretch}{1.15}
\scalebox{0.95}{%
\begin{tabular}{c c c c c c c c c c}
\toprule
\(H/h\) & \(N_E\) & \% ret.
& \(10^0\) & \(10^1\) & \(10^2\) & \(10^3\)
& \(10^4\) & \(10^5\) & \(10^6\) \\
\midrule
4
& 95 & 25.27
& 11 (5.54) & 17 (24.65) & 21 (51.97) & 21 (52.78)
& 21 (52.86) & 21 (52.86) & 21 (52.86) \\

8
& 187 & 24.61
& 9 (3.18) & 11 (5.19) & 11 (5.58) & 11 (5.63)
& 11 (5.63) & 11 (5.64) & 11 (5.64) \\

16
& 369 & 24.15
& 10 (4.60) & 11 (4.96) & 10 (5.18) & 10 (5.48)
& 10 (5.68) & 10 (4.97) & 9 (3.34) \\

32
& 739 & 24.12
& 16 (40.65) & 15 (29.51) & 13 (26.62) & 10 (4.95)
& 10 (4.95) & 11 (4.95) & 11 (4.95) \\
\bottomrule
\end{tabular}}
\caption{NOSAS results for the unaligned checkerboard coefficient test using
\(B_{\Gamma\Gamma}^{(i)}
=\operatorname{blkdiag}(A_{\Gamma\Gamma}^{(i)})\).}
\label{tab:nosas-pencil-compare-blkAgg-fraction23}
\end{table}
\subsection{Anisotropic jump coefficient not aligned with mesh} \label{sec:num_ex_3} 
 The experiments in \Cref{sec:num_ex_1} and \Cref{sec:num_ex_2} assumed that the coefficients were discontinuous but isotropic. Here we test the performance of the method for anisotropic jump coefficients. The same $3\times 3$ checkerboard grid from \Cref{sec:num_ex_2} is used, and the mesh is not aligned with the checkerboard.  The coefficients now take the form
    \begin{align} 
    \K =
    \begin{cases}
    \begin{bmatrix}
    10 + \rho_{2} & 0 \\
    0             & \rho_2
\end{bmatrix}     
     &\mbox{ if } E\in \mathcal{T}_h,\quad\quad\quad E\in \texttt{red}
    \\[12pt]
    \begin{bmatrix}
    \rho_{2} & 0 \\
    0             & 10+\rho_2
\end{bmatrix}    
       &\mbox{ if }E\in   \mathcal{T}_h,~~E\in \texttt{white}
    \end{cases}.
   \label{eq:num_ex_3}
    \end{align}
 So, the coefficients restricted to a red or white region are constant diagonal tensors, therefore the anisotropy ratio is  $1 + 10/\rho_2.$  We vary the parameter $\rho_2=10^{-j}$ for $j=0,1,2,\ldots,6$. For the eigenvalue threshold (see \Cref{sec:coarse_space}), we ser $\eta =2 \frac{ h}{H}.$
 
\Cref{tab:tab_ex_3} displays the results of this test case. 
We can see that as the anisotropy ratio varies, the number of iterations remains small and the condition number grows slowly. Moreover, as the ratio $\frac{H}{h}$ increases, the iterations and condition numbers remain similarly uniform.
It is apparent that the preconditioner remains robust for anisotropic jump coefficients. Moreover, the as the anisotropy ratio becomes larger, the condition number and total preconditioned Krylov iterations remain uniform. The preconditioner retains uniform convergence behavior as the mesh size is varied.
 \begin{table}[htbp]
\centering
\scriptsize
\setlength{\tabcolsep}{4pt}
\renewcommand{\arraystretch}{1.15}
\begin{tabular}{c c c c c c c c c c c}
\toprule
\(H/h\) & \(\eta\) & \(N_E\) & \% ret.
& \(10^0\) & \(10^1\) & \(10^2\) & \(10^3\)
& \(10^4\) & \(10^5\) & \(10^6\) \\
\midrule
4
& 1/2 & 882 & 21.81
& 12 (8.53) & 10 (4.71) & 11 (6.76) & 11 (6.85)
& 11 (6.86) & 11 (6.86) & 11 (6.86) \\

8
& 1/4 & 174 & 22.90
& 10 (4.89) & 9 (4.13) & 9 (4.16) & 8 (4.14)
& 8 (4.14) & 8 (4.14) & 8 (4.14) \\

16
& 1/8 & 366 & 23.95
& 8 (3.47) & 9 (3.75) & 8 (3.63) & 7 (3.67)
& 7 (3.67) & 7 (3.67) & 7 (3.67) \\

32
& 1/16 & 748 & 24.41
& 13 (4.90) & 8 (3.73) & 7 (3.45) & 7 (3.33)
& 7 (3.33) & 7 (3.33) & 7 (3.33) \\
\bottomrule
\end{tabular}
\caption{Results of the numerical experiments from \Cref{sec:num_ex_3} (jump coefficients unaligned with mesh). The computation uses a \(3\times 3\) subdomain
partition and the threshold rule \(\eta=2h/H\). The columns \(10^j\) correspond
to \(\rho_1=10^j\). Entries report Krylov iterations and, in parentheses, the
estimated condition number \({\kappa}\). Since the
number of retained local generalized eigenvectors varies slightly with \(\rho_1\),
the quantities \(N_E\) and \(\%\) retained are averaged.}
\label{tab:tab_ex_3}
\end{table}

\subsection{Effect of eigenvalue threshold} \label{sec:num_ex_4}  
We perform an analogous experiment to that in \Cref{sec:num_ex_3}, but we will study the impact $\eta$ has on the solver. Increasing $\eta$ will generally improve the number of iterations and the conditioning of the additive Schwarz operator. On the other hand, increasing $\eta$ results in a more expensive preconditioner. Keeping all eigenvalues results in a direct solver, since the coarse grid matrix coincides with the Schur complement (\Cref{eq_coarse_grid_bilinear}). In contrast, selecting only the smallest eigenvalue in each subdomain will result in the cheapest preconditioner, but its overall performance will deteriorate in general.  
\begin{table}[htbp]
\centering
\scriptsize
\setlength{\tabcolsep}{4pt}
\renewcommand{\arraystretch}{1.15}
\begin{tabular}{c c c c c c}
\toprule
\(\eta\) & \(k_i\) min/avg/max & \(N_E\) & \% ret.
& iter. & \({\kappa}\) \\
\midrule
\(1/2\)     & \(14/26.13/28\) & 4416 & 22.44 & 9  & 3.89 \\
\(1/4\)     & \(14/25.99/28\) & 4392 & 22.32 & 10 & 4.72 \\
\(1/8\)     & \(14/25.99/28\) & 4392 & 22.32 & 10 & 4.72 \\
\(1/16\)    & \(14/25.99/28\) & 4392 & 22.32 & 10 & 4.72 \\
\(11/200\)  & \(14/24.28/26\) & 4104 & 20.85 & 13 & 27.03 \\
\(1/20\)    & \(14/24.28/26\) & 4104 & 20.85 & 13 & 27.03 \\
\(9/200\)   & \(12/20.59/22\) & 3480 & 17.68 & 27 & 82.96 \\
\(1/25\)    & \(10/17.62/19\) & 2977 & 15.13 & 32 & 121.10 \\
\(7/200\)   & \(8/15.76/17\)  & 2664 & 13.54 & 37 & 146.10 \\
\(1/32\)    & \(8/13.92/15\)  & 2353 & 11.96 & 39 & 184.70 \\
\(11/400\)  & \(6/12.07/13\)  & 2040 & 10.37 & 45 & 231.10 \\
\(1/40\)    & \(6/11.21/12\)  & 1894 & 9.62  & 48 & 241.80 \\
\(1/50\)    & \(5/9.23/10\)   & 1560 & 7.93  & 56 & 358.80 \\
\(1/64\)    & \(3/7.38/8\)    & 1247 & 6.34  & 58 & 522.20 \\
\bottomrule
\end{tabular}
\caption{Experiment for \Cref{sec:num_ex_4}. The experiment fixes
a \(13\times13\) subdomain partition, \(H/h=8\), and \(\rho_1=10^6\), and varies
the spectral threshold \(\eta\). The column \(k_i\) reports the minimum,
average, and maximum number of retained local generalized eigenmodes over the
subdomains. The column \(N_E\) gives the total number of retained modes.}
\label{tab:nosas_eta_sensitivity_anisotropic_unaligned_13x13}
\end{table}
 Keeping all eigenvalues results in a direct solver, since the coarse grid matrix coincides with the Schur complement (\Cref{eq_coarse_grid_bilinear}). Conversely, selecting only the smallest eigenvalue in each subdomain will result in the cheapest preconditioner, but its overall performance will deteriorate in general. 

Table~\ref{tab:nosas_eta_sensitivity_anisotropic_unaligned_13x13} shows the
effect of the spectral cutoff \(\eta\) on the size and effectiveness of the
coarse space for the anisotropic, non-mesh-aligned coefficient test.  For a wide
range of relatively large thresholds, the retained eigenspace is essentially
unchanged: for \(\eta=1/4,1/8,\) and \(1/16\), the method retains
\(N_E=4392\) modes, the iteration count remains fixed at 10, and the estimated
condition number is \({\kappa}=4.72\).  Increasing the threshold to
\(\eta=1/2\) retains only 24 additional modes and gives only a modest
improvement in the estimated condition number.  Thus, over this range, changing
the cutoff does not significantly alter the effective coarse space.

Once the threshold is reduced below \(1/16\), however, the retained coarse space
begins to shrink and the preconditioner deteriorates.  For example, reducing
\(\eta\) from \(1/16\) to \(11/200\) decreases the number of retained modes from
\(4392\) to \(4104\), while increasing the estimated condition number from
\(4.72\) to \(27.03\).  The sharper degradation occurs near \(\eta=9/200\), where
the number of retained modes drops to \(3480\), the iteration count increases to
27, and \({\kappa}\) rises to \(82.96\).  Further reductions in
\(\eta\) lead to a much smaller coarse space and substantially larger Krylov
iteration counts.  This indicates that the relevant local generalized eigenvalues
are clustered with a clear transition region: the method is insensitive to
\(\eta\) above this transition, but becomes much less effective once the cutoff
excludes modes needed to represent the coefficient anisotropy.

\subsection{SPE10 data set} \label{sec:num_ex_5}  
For this experiment we consider a permeability field from the tenth Society of
Petroleum Engineers comparative solution project (SPE10)
\cite{christie2001tenth,SPE10}.  This data set provides highly heterogeneous
permeability fields that vary over several orders of magnitude.  Such strong
coefficient variation is a challenging test for the linear systems arising from
the IPDG discretization \eqref{eq_dg_disc2}.  The permeability field used in this
experiment is visualized in \Cref{fig:num_ex_5_0}.
\begin{figure}[htb!]
     \centering
     \subfloat[\centering SPE10 layer 70]{{\includegraphics[scale=0.4]{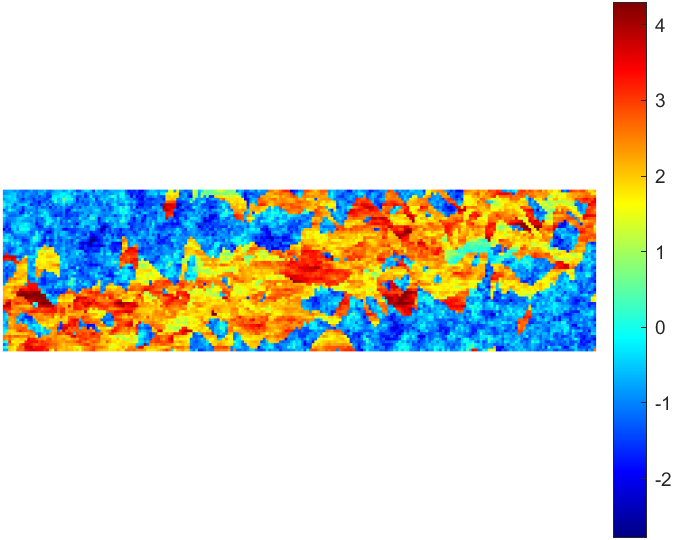} } \label{fig:num_ex_5_1}}%
     \qquad
     \subfloat[\centering SPE10 layer 85]{{\includegraphics[scale=0.4]{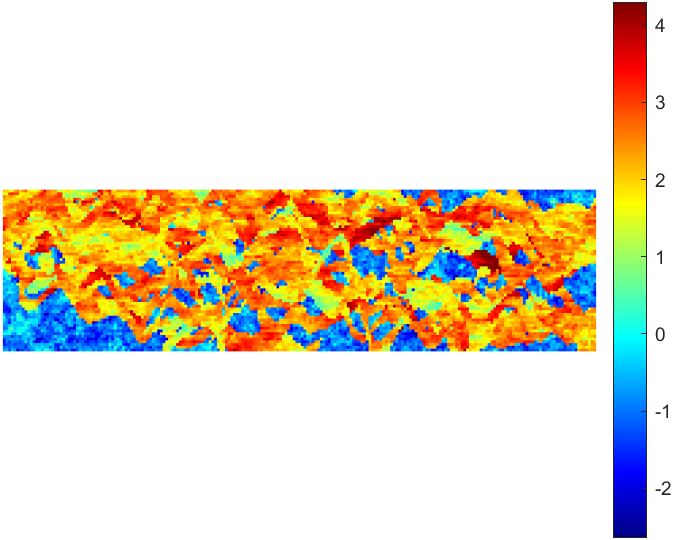} }
     \label{fig:num_ex_5_2}}%
     \caption{Various layers from the SPE10 project. The coefficients $\K$ are given in a log-log scale. It can be seen that the coefficients are highly discontinuous and vary over several orders of magnitude. 
     }%
     \label{fig:num_ex_5_0}%
 \end{figure}
We take
\[
    \Omega = [0,220]\times[0,60],
\]
matching the horizontal dimensions of the SPE10 data set.  The computational
mesh is obtained by splitting each Cartesian SPE10 cell into two triangles,
giving \(220\times60\) rectangular cells and \(26400\) triangular elements.  The
coefficient \(\K\) is taken from layer \(85\) of the SPE10 data set, using the
\(K_x\) component as a scalar isotropic permeability.  After normalization by
the minimum positive value, the resulting coefficient satisfies
\[
    1 \leq \K(x) \leq 8.803\times 10^6 .
\]
Both triangles inside a given SPE10 Cartesian cell are assigned the same
permeability value, so the coefficient jumps remain aligned with the underlying
SPE10 cell structure.

Unless otherwise stated, we use the natural SPE10 subdomain partition consisting
of \(10\times10\) Cartesian SPE cells per subdomain.  This gives a
\(22\times6\) subdomain partition, hence \(132\) subdomains, with approximately
\(200\) triangular elements per subdomain and \(H/h=10\).  We use this benchmark
to examine the sensitivity of the NOSAS coarse space to the spectral selection
parameter and to the retained coarse-space dimension.

Table~\ref{tab:nosas_spe10_fixed_fraction} reports a fixed-fraction
coarse-space sensitivity test for the SPE10 coefficient field.  In contrast to
the threshold-based selection rule, this experiment directly controls the
relative coarse-space size by retaining a prescribed fraction \(f\) of the local
generalized eigenmodes on each subdomain.  The results show a sharp tradeoff
between coarse-space size and preconditioner quality.  Retaining \(25\%\) of the
local modes gives 6 Krylov iterations and an estimated condition number
\(\widehat{\kappa}=1.72\).  Reducing the retained fraction to \(20\%\) decreases
the total number of retained modes from \(4720\) to \(3776\), but the iteration
count increases to 16 and the estimated condition number rises to \(14.75\).
Further reductions lead to progressively worse performance: at \(15\%\), \(10\%\),
and \(5\%\), the method requires 25, 29, and 45 iterations, respectively.  Thus,
for this SPE10 layer, the coarse space must retain roughly one quarter of the
local spectral modes to obtain the same robust behavior observed in the
threshold-based experiments.
\begin{table}[htbp]
\centering
\scriptsize
\setlength{\tabcolsep}{5pt}
\renewcommand{\arraystretch}{1.15}
\begin{tabular}{c c c c c c}
\toprule
\(f\) & \(k_i\) min/avg/max & \(N_E\) & \% ret.
& iter. & \({\kappa}\) \\
\midrule
\(25\%\) & \(20/35.76/40\) & 4720 & 25.28 & 6  & 1.72 \\
\(20\%\) & \(16/28.61/32\) & 3776 & 20.23 & 16 & 14.75 \\
\(15\%\) & \(12/21.45/24\) & 2832 & 15.17 & 25 & 37.94 \\
\(10\%\) & \(8/14.30/16\)  & 1888 & 10.11 & 29 & 60.54 \\
\(5\%\)  & \(4/7.15/8\)    & 944  & 5.06  & 45 & 138.40 \\
\bottomrule
\end{tabular}
\caption{Fixed-fraction spectral coarse-space sensitivity for the SPE10
coefficient test (layer 85). The experiment uses the natural \(220\times60\) SPE10
permeability field, a \(22\times6\) subdomain partition, and \(H/h=10\).
For each subdomain, the selection rule retains the prescribed fraction \(f\)
of the smallest local generalized eigenmodes. The column \(k_i\) reports
the minimum, average, and maximum number of retained modes per subdomain, while
\(N_E\) gives the total retained coarse dimension.}
\label{tab:nosas_spe10_fixed_fraction}
\end{table}

\subsection{Coarse space assessment (IPDG)}\label{sec:num_ex_6}  
As discussed in \Cref{sec:coarse_space2}, multiple coarse spaces may be used. For the DG discretization, when the coarse space is supported on the global interface $\Gamma$, the discontinuity across elements inflates the number of unknowns, particularly in comparison with the continuous Galerkin (CG) case.

In \Cref{eq:woodbury}, it was shown how to potentially reduce the complexity of the coarse grid operator (via the Sherman–Morrison–Woodbury formula \cite{horn2012matrix}). Reducing the size of the coarse grid operator is of interest because it is well-known that efficient scaling of multilevel solvers is hindered by coarse grid operators \cite{amg001,amg002}.

With respect to domain decomposition techniques, many subdomains implies a large coarse space, which can be too unwieldy (e.g., consider the extreme case of one element per subdomain). On the other end of the extreme, if there is only one subdomain with all elements, the coarse space coincides with the original discretization matrix. Therefore, a balance must be made when considering the number of subdomains.
 
In \Cref{fig:num_ex_6_1}, we consider a domain decomposition of an unstructured mesh (128 partitions). The mesh has 7074 triangular elements. The corresponding sparsity patterns of the matrices ${\bf A}_{\Gamma\Gamma}$ and ${\bf S}_{\Gamma }$ are displayed in \Cref{fig:num_ex_6_2} and \Cref{fig:num_ex_6_2_0} respectively. Although the dimensions of ${\bf A}_{\Gamma\Gamma}$ and ${\bf S}_{\Gamma }$ are the same, we note that ${\bf A}_{\Gamma\Gamma}$ is significantly more sparse: 431066 nonzeros compared to 33636 nonzeros. This is over a complete order of magnitude difference. Although the dimension of the coarse space does matter, a better assessment of computational cost of direct solvers for sparse matrices is the number of nonzero entries \cite{davis2006direct}.
\begin{figure}[htb!]
    \centering
    \subfloat[\centering mesh partition]{{\includegraphics[clip,scale=0.35]{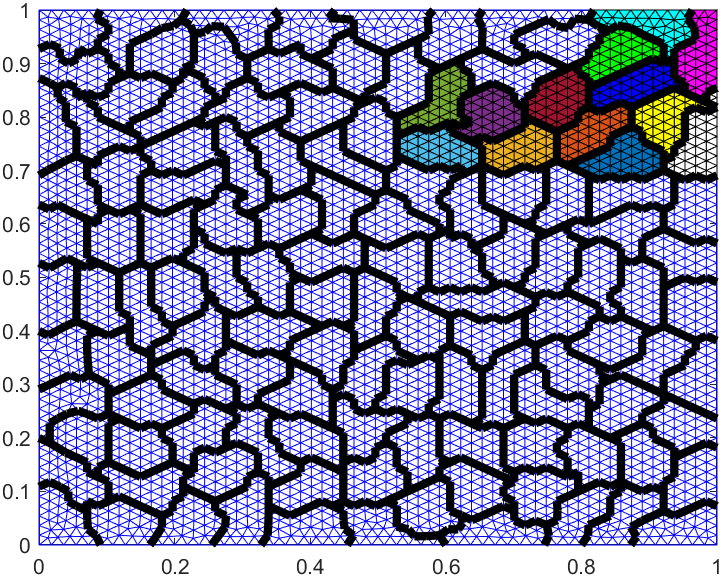} } \label{fig:num_ex_6_1}}%
    \subfloat[\centering sparsity of ${\bf A}_{\Gamma\Gamma}$]{{\includegraphics[clip,scale=0.39]{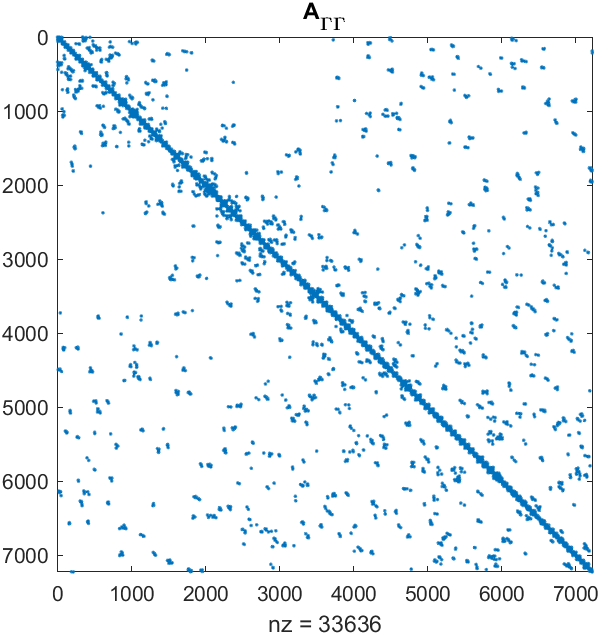} }
    \label{fig:num_ex_6_2}}
    %
    \subfloat[\centering sparsity of ${\bf S}_{\Gamma }$]{{\includegraphics[clip,scale=0.39]{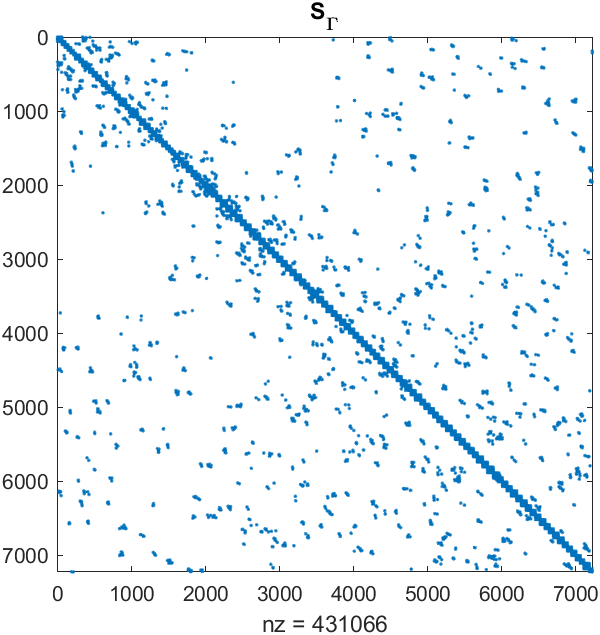} }
    \label{fig:num_ex_6_2_0}}
    \caption{
Comparison of the sparsity of ${\bf A}_{\Gamma\Gamma}$ and ${\bf S}_{\Gamma }$ for an unstructured mesh partitioning (128 partitions).
    }%
    \label{fig:num_ex_6_0}%
\end{figure} 

\begin{figure}[htb!]
    \centering
    \subfloat[\centering mesh partition]{{\includegraphics[clip,scale=0.35]{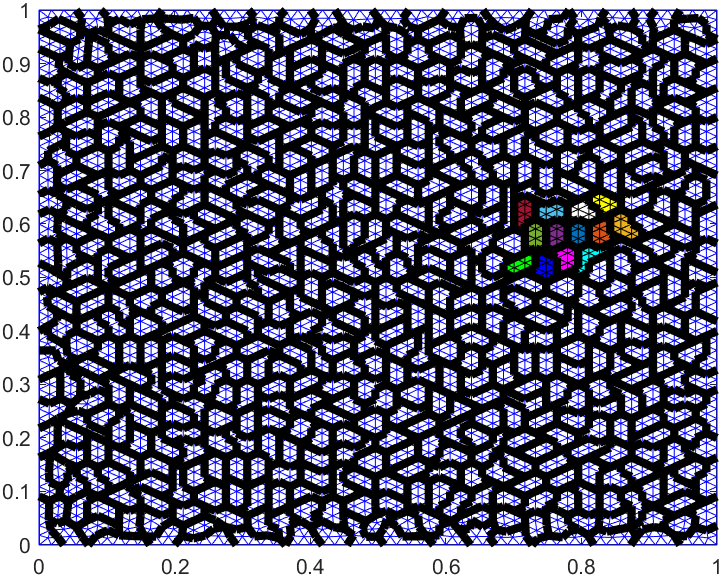} } \label{fig:num_ex_6_3}}%
    \subfloat[\centering sparsity of ${\bf A}_{\Gamma\Gamma}$]{{\includegraphics[clip,scale=0.39]{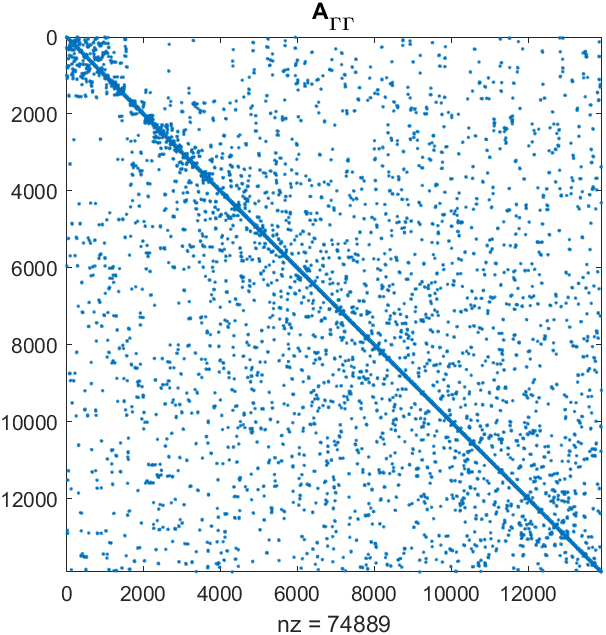} }
    \label{fig:num_ex_6_4}}%
    \subfloat[\centering sparsity of ${\bf S}_{\Gamma }$]{{\includegraphics[clip,scale=0.39]{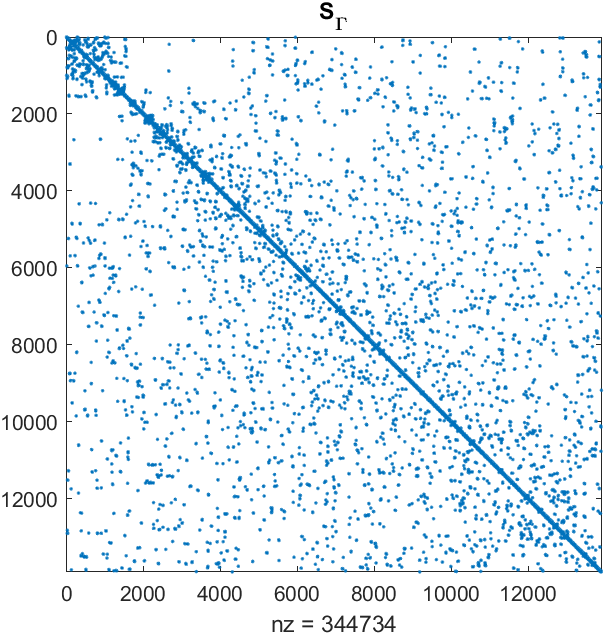} }}        
    \caption{ 
Comparison of the sparsity of ${\bf A}_{\Gamma\Gamma}$ and ${\bf S}_{\Gamma }$ for an unstructured mesh partitioning (600 partitions).    
    } 
    \label{fig:num_ex_6_00}%
\end{figure} 
\Cref{fig:num_ex_6_00} Provides a similar comparison, but this time using a larger number of subdomains (600 partitions). Here we can still observe a significant reduction in the number of nonzero entries in ${\bf A}_{\Gamma\Gamma}$.

We also remark that the storage can be further reduced by leveraging the observation that ${\bf A}_{\Gamma\Gamma}$ is symmetric. If ${\bf A}_{\Gamma\Gamma}$ can be factored efficiently (e.g., Cholesky decomposition \cite{davis2006direct}), then the computational cost of the coarse space is reduced to a $N_E\times N_E$ matrix $ 
{\bf C}^{-1}{\bf D}^{-1}
-
{\bf U}^T{\bf A}_{\Gamma\Gamma}^{-1}{\bf U} 
$ (see \Cref{eq:woodbury}); where $N_E$ is the total number of eigenvalues (dimension of the eigenspace). From \Cref{sec:matrix_coarse}, one can deduce that the matrices ${\bf C}$ and ${\bf D}$ are block-diagonal, so inversion is straightforward.
 
\subsection{Coarse space assessment (CG)}\label{sec:num_ex_7}  
We continue the assessment from \Cref{sec:num_ex_6}, but comparing the efficiency of a continuous Galerkin coarse space.  For finite element discretizations, the notion of auxiliary space preconditioning is considered a state-of-the-art technique \cite{xu1996auxiliary}. These methods allow for the reuse of effective preconditioners for classical discretizations which have been studied intensively (such as continuous Galerkin).
 
For reproducibility purposes, we take the domain to be $\Omega=[0,1]^2$ and consider a structured mesh. The domain is then partitioned into $N$ uniform squares with diameter $H$. Each of these squares are further subdivided into smaller squares of diameter $h$. Each of the squares of diameter $h$ are divided into two equal area right triangles.
 
For each subdomain, Approximately 20\% of the smallest eigenpairs are retained for this example. The parameter $\K$ is the anisotropic version introduce in \Cref{sec:num_ex_3}.  As outlined in \Cref{sec:coarse_space2}, we examine two different coarse spaces, an inherited and noninherited coarse grid operator. The inherited coarse grid operator is formed by the Galerkin triple product: $\tilde{{\bf S}}_\Gamma = R_{\text{DG}}^{\text{CG}} {\bf S}_\Gamma (R_{\text{DG}}^{\text{CG}})^T$. The noninherited coarse grid operator is simply the CG discretization of the model problem \Cref{eq:model}. This coarse grid is noninherited in the sense that it does not necessarily retain information from the prolongation operator $\R_0^T$.

	Generally, to ensure efficient preconditioning, auxiliary space methods require pre-smoothing or post-smoothing \cite{bramble2019multigrid,xu1996auxiliary}. After an application of the additive Schwarz preconditioner (with a continuous Galerkin course grid operator), we apply a single post-smoothing step to the DG system consisting of a block Jacobi (block size is ${d+k \choose d}$, where $d$ is the dimension and $k$ is the polynomial degree). From various computational experiments with a CG coarse grid, pre/post-smoothing was found to be necessary for convergence. Additional smoothing steps reduces the iteration counts, but increases the computational cost. 

\Cref{tab:tab_ex_7_1} contains the results for the Galerkin triple product coarse space. If the ratio $\frac{H}{h}$ is fixed, we see a constant number of iterations are needed to reach convergence, and a condition number that increases very slowly.
\begin{table}[htb!]
\centering 
\scalebox{0.9}{
\begin{tabular}{c c c c}
 \hline 
 $(H,h)$ & iterations &  $\kappa$ &  $\dim {\bf S}_{\Gamma}$ \\ 
 \hline 
$(2^{-2},2^{-3})$ &54& 7.2520e+01& 33 \\
$(2^{-3},2^{-4})$&42& 6.5144e+01& 161\\
$(2^{-4},2^{-5})$&44& 6.7636e+01& 705\\
$(2^{-5},2^{-6})$&41& 6.9401e+01& 2945\\
$(2^{-2},2^{-4})$&36& 4.9754e+01& 81   \\    
$(2^{-3},2^{-5})$&36& 4.5032e+01& 385  \\    
$(2^{-4},2^{-6})$&32& 4.6166e+01& 1665 \\    
$(2^{-5},2^{-7})$&32& 4.6954e+01& 6913 \\    
 \hline 
 \end{tabular}}   
\caption{Results of the numerical experiments from \Cref{sec:num_ex_7}. The estimated condition number of the Schwarz operator is given by $\kappa$.  The column denoted by $\dim {\bf S}_{\Gamma}$ reports the size of the CG coarse grid operator (Galerkin triple product coarse grid). 
}
\label{tab:tab_ex_7_1}
\end{table}
 
\begin{table}[htb!]
\centering
\scalebox{0.9}{
\begin{tabular}{c c c c}
 \hline 
 $(H,h)$ & iterations &  $\kappa$ &  $\dim {\bf S}_{\Gamma}$ \\ 
 \hline 
$(2^{-2},2^{-3})$&45&  6.3377e+01& 33   \\        
$(2^{-3},2^{-4})$&51&  1.0855e+02& 161\\ 
$(2^{-4},2^{-5})$&60&  4.2334e+02& 705\\ 
$(2^{-5},2^{-6})$&93&  1.7125e+03& 2945 \\   
$(2^{-2},2^{-4})$&37 & 5.0978e+01& 81\\   
$(2^{-3},2^{-5})$&46 & 1.9357e+02& 385\\  
$(2^{-4},2^{-6})$&69 & 8.5629e+02& 1665\\ 
$(2^{-5},2^{-7})$&131& 3.7272e+03 &6913\\
 \hline 
 \end{tabular}}
\caption{Results of the numerical experiments from \Cref{sec:num_ex_7}. The estimated condition number of the Schwarz operator is given by $\kappa$.  The column denoted by $\dim {\bf S}_{\Gamma}$ reports the size of the CG coarse grid operator (rediscretization coarse grid). 
}
\label{tab:tab_ex_7_2}
\end{table}
\Cref{tab:tab_ex_7_2} contains the results for the rediscretized coarse space. If the ratio $\frac{H}{h}$ is fixed, the number of iterations required to reach convergence increases as the subdomain size increases, and the condition number increases more rapidly. The noninherited space is easier to construct, but does not retain the important information from the local generalized eigenvalue problems.

	The inherited CG coarse space performs much better than the noninherited CG coarse space. Moreover, the coarse space size is the same for both cases. Here, the CG interface coarse space is over 4 times smaller than the DG interface coarse space.

\subsection{Nonsymmetric IPDG schemes}\label{sec:num_ex_8} 
The focus of this section is to explore the efficacy of the proposed additive Schwarz preconditioner to the nonsymmetric IPDG scheme (as described in \Cref{sec:non_sym}). In the previous sections we focused on the SIPG scheme. The nonsymmetric IPDG schemes posses attractive features (e.g., improved stability or fewer terms in the bilinear form) and are used in many applications \cite{bastian2014fully,bastian2012algebraic,dolejvsi2008semi,fabien2022numerical,li2015high,liu2009modeling,riviere2008discontinuous,sun2005symmetric}.  
 On the other hand, the loss of symmetry introduces additional challenges for linear solvers.

	We repeat the numerical experiments outlined in \Cref{sec:num_ex_1,sec:num_ex_2,sec:num_ex_3,sec:num_ex_4}. The user-defined parameter is fixed as $\sigma=15$ unless stated otherwise. As the discretization matrix is no longer symmetric, we use the BICG Krylov subspace method as an outer solver. For brevity, we do not include results using the GMRES method, as similar outcomes were observed. 
\subsubsection{NIPG and IIPG for isotropic jump coefficients aligned with mesh}
\Cref{tab:tab_ex_8_1} contains the results of repeating the experiment from \Cref{sec:num_ex_1} for the NIPG and IIPG schemes. In the first column, the notation ``(NIPG,IIPG)'' stands for the number of preconditioned Krylov steps required to reduce the relative residual smaller than $10^{-6}$. The preconditioner works well for both the NIPG and IIPG schemes, independent of the mesh size and jump coefficient magnitude. 
\begin{table} [htb!]
\centering
\small
\renewcommand{\arraystretch}{1.2}
\scalebox{0.9}{
\begin{tabular}{l| l l l l l l l}
\hline 
$\rho_1$                      & $10^0$ & $10^1$ & $10^2$ & $10^3$ & $10^4$ & $10^5$ & $10^6$
 \\ \hline 
$\frac{H}{h}=4$  &   &   &    &   &   &  & 
  \\ 
(NIPG,IIPG) & (19,25) & (20,19) & (23,22) & (22,21) & (21,21) & (22,22) & (22,22)
 \\ \hline 
$\frac{H}{h}=8$  &   &   &    &   &   &  & 
  \\
(NIPG,IIPG) & (17,18) & (21,35) & (23,20) & (23,23) & (23,24) & (23,23) & (22,22)
 \\ 
\hline 
$\frac{H}{h}=16$  &   &   &    &   &   &  & 
  \\
(NIPG,IIPG) & (26,35) & (22,23) & (22,26) & (23,22) & (22,22) & (23,22) & (22,21)\\ 
\hline 
\end{tabular}  }
\caption{NIPG and IIPG schemes for \Cref{sec:num_ex_1} (isotropic jump coefficients aligned with mesh).}
\label{tab:tab_ex_8_1}
\end{table}
\subsubsection{NIPG and IIPG for isotropic jump coefficients unaligned with mesh}
We repeat the done experiment in \Cref{sec:num_ex_2} for the NIPG and IIPG schemes. In \Cref{tab:tab_ex_8_2} we report the results. Although the jump coefficients are unaligned with the mesh, the preconditioner performs well for the nonsymmetric schemes. This robust behavior persists through changes in the mesh and subdomain size. Compared to the SIPG results from \Cref{sec:num_ex_2}, we see that the NIPG and IIPG schemes require slightly more total iterations.
\begin{table} [htb!]
\centering
\small
\renewcommand{\arraystretch}{1.2}
\scalebox{0.9}{
\begin{tabular}{l| l l l l l l l}
\hline 
$\rho_1$                      & $10^0$ & $10^1$ & $10^2$ & $10^3$ & $10^4$ & $10^5$ & $10^6$
 \\ \hline 
$\frac{H}{h}=4$  &   &   &    &   &   &  & 
  \\ 
(NIPG,IIPG) & (18,19) & (20,21) & (25,23) & (23,21) & (22,23) & (22,23) & (24,24)
 \\ \hline 
$\frac{H}{h}=8$  &   &   &    &   &   &  & 
  \\
(NIPG,IIPG) & (17,18) & (19,19) & (24,21) & (22,19) & (21,20) & (20,21) & (21,21)
 \\ 
\hline 
$\frac{H}{h}=16$  &   &   &    &   &   &  & 
  \\
(NIPG,IIPG) & (26,28) & (20,19) & (21,23) & (19,18) & (19,19) & (21,19) & (21,19)
\\ 
\hline 
\end{tabular}  }
\caption{NIPG and IIPG schemes for \Cref{sec:num_ex_2} (isotropic jump coefficients unaligned with mesh). }
\label{tab:tab_ex_8_2}
\end{table}
 \subsubsection{NIPG and IIPG for anisotropic jump coefficients unaligned with mesh}
Next we revisit the experiment conducted in \Cref{sec:num_ex_3}, this time for the NIPG and IIPG schemes. We gather from \Cref{tab:tab_ex_8_3} that the preconditioner for NIPG and IIPG remains robust even for anisotropic jump coefficients unaligned with the mesh. Compared to the SIPG results from \Cref{sec:num_ex_2}, we see that the NIPG and IIPG schemes require slightly more total iterations.
\begin{table} [htb!]
\centering
\small
\renewcommand{\arraystretch}{1.2}
\scalebox{0.9}{
\begin{tabular}{l| l l l l l l l}
\hline 
$\rho_1$                      & $10^0$ & $10^1$ & $10^2$ & $10^3$ & $10^4$ & $10^5$ & $10^6$
 \\ \hline 
$\frac{H}{h}=4$  &   &   &    &   &   &  & 
  \\ 
(NIPG,IIPG) &(27,27) & (23,29) & (21,20) & (19,19) & (18,18) & (19,24) & (18,20)
 \\ \hline 
$\frac{H}{h}=8$  &   &   &    &   &   &  & 
  \\
(NIPG,IIPG) & (23,23) & (21,22) & (19,19) & (21,19) & (17,18) & (18,18) & (17,18)
 \\ 
\hline 
$\frac{H}{h}=16$  &   &   &    &   &   &  & 
  \\
(NIPG,IIPG) & (23,23) & (22,22) & (19,21) & (21,20) & (18,18) & (20,22) & (18,20)
\\ 
\hline 
\end{tabular}  }
\caption{NIPG and IIPG schemes for \Cref{sec:num_ex_3} (anisotropic jump coefficients unaligned with mesh). 
}
\label{tab:tab_ex_8_3}
\end{table}

\subsubsection{NIPG and IIPG eigenvalue threshold effect}
In \Cref{sec:num_ex_4} it was demonstrated how the performance of the preconditioner for the SIPG method changed depending on the eigenvalue threshold. We examine the impact that this eigenvalue threshold parameter has on the NIPG and IIPG schemes.
 \begin{table} [htb!]
\centering
\small
\renewcommand{\arraystretch}{1.2}
\scalebox{0.9}{
\begin{tabular}{l| l l l l }
\hline 
$\frac{H}{h}=8$  &   &   &    &   
  \\\hline 
$\%$ ret                     & $22\%$ & $18\%$ & $10\%$ & $6\%$  \\ \hline 
(NIPG,IIPG)  & 
(13,11)   &  
(20,22)   & 
(35,30)   &  
(112,104)   
\\
\hline 
\end{tabular}  }
\caption{NIPG and IIPG schemes for \Cref{sec:num_ex_4} ($\%$ ret is the approximate number of eigenpairs retained for each subdomain). 
}
\label{tab:tab_ex_8_4}
\end{table}
Similar to \Cref{sec:num_ex_4}, from \Cref{tab:tab_ex_8_4} we gather that increasing $\mu$ results in fewer preconditioned iterations. Conversely, decreasing $\mu$ has the effect of reducing the number of eigenvalues per subdomain, but resulting in more iterations to reach tolerance. In practice the choice of $\mu$ will depend on the coefficient $\K$, but the preconditioner is typically cheaper to apply for smaller values of $\mu$. However, the total iteration counts may suffer.

 \subsubsection{NIPG dependence on penalty parameter}
One of the interesting features about the NIPG scheme is that it can be thought of as parameter free, since any $\sigma>0$ will provide a stable method \cite{riviere2008discontinuous}. However, the choice of penalty parameter $\sigma$ does effect the conditioning of the linear system. We examine the effect $\sigma$ has on the total iteration count for the NIPG scheme in \Cref{fig:nipg_penalty}. The test case from \Cref{sec:num_ex_3} is used, with $\frac{H}{h}=8$, $\rho_2=\{2^{-1},2^{-3},2^{-5}\}$, and the penalty parameter $\sigma=2^j$ for $j=-2,-1,0,1,2,\ldots,7$.
\begin{figure}[htb!]
\centering
	\includegraphics[scale=0.5]{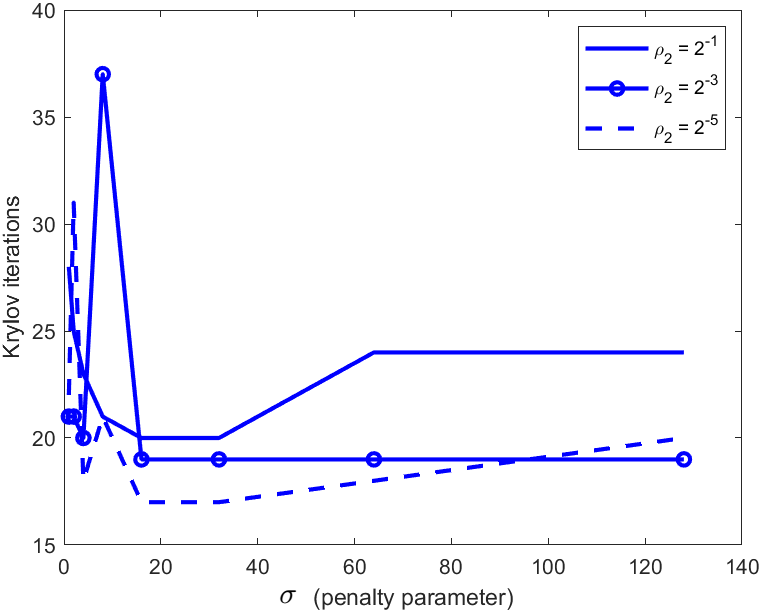}
\caption{Preconditioned Krylov iterations versus NIPG penalty parameter.}
\label{fig:nipg_penalty}	
\end{figure}
From \Cref{fig:nipg_penalty} we can deduce that the penalty parameter has a slight impact on the total iteration count.  We remark that the complete penalty parameter takes into account a harmonic averaging of $\K$ (see \eqref{eq:dg_penalty}), as well as $\sigma$. SIPG and IIPG are much more sensitive for smaller values of $\sigma$.

	In summary, the additive Schwarz method applied to the NIPG and IIPG schemes give robust and scalable results; similar to the SIPG case. The NIPG scheme is less sensitive for smaller penalty parameters, but the preconditioner performs well for both the NIPG and IIPG schemes in the case of unaligned anisotropic jump coefficients. According to the numerical experiments conducted in \Cref{sec:num_ex_1,sec:num_ex_2,sec:num_ex_3,sec:num_ex_4}, the SIPG method has fewer total iterations than the NIPG or IIPG schemes. The eigenvalue threshold $\mu$ is still important to balance for NIPG and IIPG, since it influences how efficient the preconditioner is.
  
\section{Conclusions}
	In this paper we designed and analyzed a nonoverlapping additive Schwarz preconditioner for interior penalty discontinuous Galerkin discretizations of anisotropic elliptic problems. The preconditioned method coupled with a Krylov subspace iteration is shown to be independent of the highly discontinuous (and anisotropic) jump coefficients as well as the subdomain size. A key aspect of this work is the utilization of the so-called spectral coarse spaces, which consider local generalized eigenvalue problems in each subdomain \cite{spillane2014abstract}.

	To increase efficacy, various auxiliary spaces are considered to reduce the size of the coarse grid operator. It is determined that certain continuous Galerkin auxiliary spaces can be suitable coarse grid candidates, but care must be taken to ensure the low-rank discrete energy harmonic extension in each subdomain is retained approximately.

    Several options for the local generalized eigenvalue problems were introduced and theoretically analyzed. It is found that the variant which uses the block diagonal of the Schur complement is very robust, but expensive, since the Schur complement matrix is needed to be assembled and stored.  The variant which uses the block diagonal of ${\bm A}_{\Gamma\Gamma}^{(i)}$ allows for more efficient computations, while simultaneously preserving strong solver performance.
    
	 We also demonstrated how to modify the additive Schwarz preconditioner such that it is applicable to the nonsymmetric IPDG schemes. Several numerical experiments verified the theory and validated the robustness of the preconditioner. Future work includes extending these ideas to three and multilevel DD methods, as well as overlapping methods. 

\bibliographystyle{siamplain}
\bibliography{references}
\end{document}